\newtheorem{theorem}{Theorem}[section]
\newtheorem{lemma}[theorem]{{\bf Lemma}}
\newtheorem{rem}[theorem]{{\bf Remark}}
\newtheorem{ex}[theorem]{{\bf Example}}
\newtheorem{definition}{Definition}[section]
\numberwithin{equation}{section}
\newenvironment{proof}{\indent{\em Proof:}}{\quad \hfill
$\Box$\vspace*{2ex}}
\begin{document}
\setcounter{page}{1}
\begin{center}
\vspace{0.4cm} {\large{\bf On the Impulsive Implicit $\Psi$--Hilfer Fractional Differential Equations with Delay}}

\vspace{0.5cm}
Jyoti P. Kharade  $^{1}$\\
jyoti.thorwe@gmail.com\\
\vspace{0.35cm}
Kishor D. Kucche $^{2}$ \\
kdkucche@gmail.com \\

\vspace{0.35cm}
 $^{1,2}$ Department of Mathematics, Shivaji University, Kolhapur-416 004, Maharashtra, India.\\
 \end{center}
\def\baselinestretch{1.0}\small\normalsize
\begin{abstract}
In this paper, we investigate the  existence and uniqueness of solutions and derive the Ulam--Hyers--Mittag--Leffler stability results for impulsive implicit $\Psi$--Hilfer fractional differential equations with time delay. It is demonstrated that  the Ulam--Hyers and generalized Ulam--Hyers stability are the specific cases of Ulam--Hyers--Mittag--Leffler stability. Extended version of Gronwall inequality, abstract Gronwall lemma and Picard operator theory are the primary devices in our investigation. We give an example to illustrate the obtained results.

\end{abstract}
\noindent\textbf{Key words:} Fractional differential equations; Stability; $\Psi$--Hilfer derivative; Existence and uniqueness; Fractional integral inequality.
 \\
\noindent
\textbf{2010 Mathematics Subject Classification:} 34A08, 34D20, 34A37, 35A23.

\def\baselinestretch{1.5}

\allowdisplaybreaks
\allowdisplaybreaks
\section{Introduction}
In the literature, a lot of consideration has been paid to analyze the impulsive fractional differential equations (FDEs)  in view of its applications in displaying  several real world phenomena that appearing in the applied sciences.
Diverse ideas of solutions of impulsive FDEs and the criterion to derive  existence and uniqueness results have been given in the review done by Wang et al.\cite{Fečkan}. Many inserting work on impulsive FDEs can be found in the literature that deals with existence, uniqueness, data dependence and stability of solutions, see for instance, the works of Wang et al. \cite{Wang1, Wang2,Wang22}, Feckan et al. \cite{fec},  Benchohra and Slimani \cite{Benchohra3}, Mophou \cite{Mophou} and the references given therein. 

On the other hand, Nieto et.al.\cite{JNieto} and Benchora et.al. \cite{Benchora2,Benchora3} have started the investigation of implicit FDEs and got intriguing outcomes  pertaining to existence and Ulam types stability. In 2016, Kucche et.al. \cite{Kucche} acquired  existence results along with data dependence of solutions  for implicit FDEs by means of fractional integral inequality and the $\epsilon$-approximated solutions. Shah et. al. \cite{Shah} investigated  existence and Hyers--Ulam stability of solution for implicit impulsive FDEs.

The existence and uniqueness of solutions and Ulam--Hyers--Mittag--Leffler (UHML) stability of different kinds of fractional differential and integral equations with time delay have been investigated in \cite{Eghbali,Zhang,Niazi} by using  Picard operator thoery and abstract Gronwall's lemma.

 Very recently, Liu et. al. \cite{Liu3}  considered $\Psi$-Hilfer FDEs and obtained existence, uniqueness and UHML stability of solutions  via Picard operator theory and a generalized Gronwall inequality involving  $\Psi-$Riemann--Liouville fractional integral.  In 2019, Sousa et al. \cite{KDK} analyzed impulsive FDEs involving $\Psi$-Hilfer  derivative.
 
 Contemplating the works referenced above, we firmly feel to consider the impulsive FDEs with generalized fractional derivative viz. $\Psi$-Hilfer fractional derivative that bring together several well known fractional derivatives. Such an investigation surely contribute to the fractional calculus. Propelled by this reality and motivated by the works of \cite{Shah}-\cite{Liu3}, in the present paper, we study the nonlinear implicit impulsive $\psi$-Hilfer fractional differential equation ($\psi$-HFDE) with time delay of the form:  
\begin{small}
\begin{align}
& ^\mathcal{H} \mathbb{D}^{\alpha,\, \beta; \, \Psi}_{0^+}u(t)=f\left(t, u(t), u(h(t)), ^\mathcal{H} \mathbb{D}^{\alpha,\, \beta; \, \Psi}_{0^+}u(t) \right) ,~t \in J=(0,b]-\{t_1, t_2,\cdots ,t_p\}, \label{p1}\\
&\Delta \mathbb{I}_{0^+}^{1-\rho; \, \Psi}u(t_k)= \mathcal{J}_k(u(t_k^-)),  ~k = 1,2,\cdots,p, \label{p2} \\
&\mathbb{I}_{0^+}^{1-\rho; \, \Psi}u(0)=u_0 \in \mathbb{R},  ~ \rho=\alpha+\beta-\alpha\beta, \label{p3} \\
& u(t)=\phi(t),\, t\in [-r,0], \label{p4} 
\end{align}
\end{small}
where $\Psi\in C^{1}(J,\mathbb{R})$ be an increasing function with $\Psi'(x)\neq 0$, for all $x\in J$, ~$^\mathcal{H} \mathbb{D}^{\alpha, \, \beta; \, \Psi}_{0^+}(\cdot)$ is the $\Psi$-Hilfer fractional derivative of order $\alpha~ (0<\alpha<1)$ and type $\beta ~(0\leq\beta\leq 1)$, ~ $\mathbb{I}_{0^+}^{1-\rho; \, \Psi}$ is left sided $\Psi$-Riemann Liouville fractional integration operator, $ ~0=t_0< t_1 < t_2 < \cdots < t_p < t_{p+1}=b $,~$
\Delta \mathbb{I}_{0^+}^{1-\rho; \, \Psi}u(t_k)= \mathbb{I}_{0^+}^{1-\rho; \, \Psi}u(t_k^+)- \mathbb{I}_{0^+}^{1-\rho; \, \Psi}u(t_k^-) 
$, 
~$
 \mathbb{I}_{0^+}^{1-\rho; \, \Psi}u(t_k^+) = \lim_{\epsilon\to 0^+} \mathbb{I}_{0^+}^{1-\rho; \, \Psi}u(t_k + \epsilon)$ 
~~\mbox{and}~~
$\mathbb{I}_{0^+}^{1-\rho; \, \Psi}u(t_k^-) = \lim_{\epsilon\to 0^-} \mathbb{I}_{0^+}^{1-\rho; \, \Psi}u(t_k + \epsilon)
$.  The functions $f:J\times \mathbb{R}^3 \to \mathbb{R}$ and $\mathcal{J}_k:\mathbb{R}\to \mathbb{R}$ are appropriate functions specified latter, $h\in C(J,[-r,b])$ is a continuous delay function such that $h(t)\leq t,~ t\in J$. Further, $\phi\in \mathcal{C}:=C([-r,0],\mathbb{R})-$ the space of all continuous functions from $[-r,0]$ to $\mathbb{R}$, with supremum norm $\|\phi\|_{\mathcal{C}}=\sup_{t\in [-r,0]}|\phi(t)|$. 

Our main objective is to investigate the existence and uniqueness of solutions and examine the UHML stability of impulsive implicit $\Psi$--HFDE \eqref{p1}--\eqref{p4} with time delay. It is observed that the Ulam--Hyers and generalized Ulam--Hyers stability for the problem \eqref{p1}--\eqref{p4} are obtained as particular cases of UHML stabilty results that we acquired. Our analysis is based on extended version of Gronwall inequality, abstract Gronwall lemma and the  Picard operator theory.

Results obtained in the present paper extends the works of \cite{Shah}-\cite{Liu3} and can be considered as a contribution to the developing field of fractional calculus with generalized fractional derivative operators. 

\section{Preliminaries} \label{preliminaries}
Throughout the paper, we assume that  $\Psi\in C^{1}(J,\mathbb{R})$ be an increasing function with $\Psi'(x)\neq 0$, $x\in J$. 
\begin{definition} [\cite{Kilbas}]
Let $\alpha>0$ and  $f$ an integrable function defined on $J$. Then $\Psi$-Riemann-Liouville fractional integrals of $f$ is given by  
\begin{equation*}\label{21}
\mathbb{I}_{0+}^{\alpha ;\, \Psi }f\left( t\right) :=\frac{1}{\Gamma \left( \alpha
\right) }\int_{0}^{t}\mathcal{A}_\Psi^\alpha(t,s) f\left( s \right) ds, ~\mbox{where}~ \mathcal{A}_\Psi^\alpha(t,s)=\Psi^{'}(s)(\Psi(t)-\Psi(s))^{\alpha-1}.
\end{equation*}
\end{definition}
Note that  for $\delta >0$, we have
\begin{small}
$\mathbb{I}_{0^+}^{\alpha;\, \Psi}(\Psi(t)-\Psi(0))^{\delta-1}=\frac{\Gamma(\delta)}{\Gamma(\alpha+\delta)}(\Psi(t)-\Psi(0))^{\alpha + \delta-1}.$
\end{small}
\begin{definition} [\cite{{Sousa1}}]
Let $f \in C^{1} \left( J,\,\mathbb{R}\right)$. The $\Psi$--Hilfer fractional derivative of a function $f$ of order $0<\alpha<1$ and type $0\leq \beta \leq 1$, is defined by
\begin{small}
$$
^\mathcal{H} \mathbb{D}^{\alpha, \, \beta; \, \Psi}_{0^+}f(t)= \mathbb{I}_{0^+}^{\beta ({1-\alpha});\, \Psi} \left(\frac{1}{{\Psi}^{'}(t)}\frac{d}{dt}\right)^{'}\mathbb{I}_{0^+}^{(1-\beta)(1-\alpha);\, \Psi} f(t).
$$
\end{small}
\end{definition}

\begin{theorem}[\cite{Sousa1}]\label{ab}
Let $f \in C^{1} \left( J,\,\mathbb{R}\right)$, $0<\alpha<1$ and $0\leq\beta \leq 1 $. Then
\begin{small}
\begin{enumerate}[topsep=0pt,itemsep=-1ex,partopsep=1ex,parsep=1ex]
\item[(i)] $\mathbb{I}_{0^+}^{\alpha;\, \Psi}\, {^\mathcal{H} \mathbb{D}^{\alpha, \, \beta; \, \Psi}_{0^+}}f(t)= f(t)- \mathcal{R}_\Psi^\rho (t,0) \,  \mathbb{I}_{0^+}^{(1-\beta)\,(1-\alpha);\,\Psi}f(0),$ where $\mathcal{R}_\Psi^\rho (t,0) = \frac{(\Psi(t)-\Psi(0))^{\rho-1}}{\Gamma(\rho)}$,
\item[(ii)] ${^\mathcal{H}\mathbb{D}^{\alpha, \, \beta; \, \Psi}_{0^+}}\,\mathbb{I}_{0^+}^{\alpha;\, \Psi}f(t)=f(t).$
\end{enumerate}
\end{small}
\end{theorem}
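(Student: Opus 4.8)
The plan is to reduce both identities to a single inversion formula for the $\Psi$-Riemann--Liouville operators, by exploiting the semigroup law $\mathbb{I}_{0^+}^{a;\,\Psi}\mathbb{I}_{0^+}^{b;\,\Psi}g=\mathbb{I}_{0^+}^{a+b;\,\Psi}g$ (valid for $a,b\geq 0$, with $\mathbb{I}_{0^+}^{0;\,\Psi}=\mathrm{Id}$, and provable from Dirichlet's formula together with the Beta integral) combined with the elementary facts $\mathbb{I}_{0^+}^{1;\,\Psi}g(t)=\int_0^t\Psi'(s)g(s)\,ds$ and $\frac{1}{\Psi'(t)}\frac{d}{dt}\,\mathbb{I}_{0^+}^{1;\,\Psi}g(t)=g(t)$. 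Throughout write $D:=\frac{1}{\Psi'(t)}\frac{d}{dt}$ for the $\Psi$-derivative appearing in the definition of ${^\mathcal{H}\mathbb{D}^{\alpha,\beta;\,\Psi}_{0^+}}$.

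For (i) I would first insert the definition and collapse the two outer integrals: since $\alpha+\beta(1-\alpha)=\alpha+\beta-\alpha\beta=\rho$, the semigroup law gives
\[
\mathbb{I}_{0^+}^{\alpha;\,\Psi}\,{^\mathcal{H}\mathbb{D}^{\alpha,\beta;\,\Psi}_{0^+}}f
=\mathbb{I}_{0^+}^{\alpha;\,\Psi}\mathbb{I}_{0^+}^{\beta(1-\alpha);\,\Psi}D\,\mathbb{I}_{0^+}^{(1-\beta)(1-\alpha);\,\Psi}f
=\mathbb{I}_{0^+}^{\rho;\,\Psi}D\,\mathbb{I}_{0^+}^{1-\rho;\,\Psi}f,
\]
where I used $(1-\beta)(1-\alpha)=1-\rho$. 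Setting $G:=\mathbb{I}_{0^+}^{1-\rho;\,\Psi}f$, the task reduces to evaluating $\mathbb{I}_{0^+}^{\rho;\,\Psi}DG$, for which the key ingredient is the commutation identity
\[
\mathbb{I}_{0^+}^{\rho;\,\Psi}DG(t)=D\,\mathbb{I}_{0^+}^{\rho;\,\Psi}G(t)-\frac{(\Psi(t)-\Psi(0))^{\rho-1}}{\Gamma(\rho)}\,G(0).
\]
Granting this, the semigroup law gives $\mathbb{I}_{0^+}^{\rho;\,\Psi}G=\mathbb{I}_{0^+}^{\rho;\,\Psi}\mathbb{I}_{0^+}^{1-\rho;\,\Psi}f=\mathbb{I}_{0^+}^{1;\,\Psi}f$, whence $D\,\mathbb{I}_{0^+}^{\rho;\,\Psi}G=f$; since $G(0)=\mathbb{I}_{0^+}^{(1-\beta)(1-\alpha);\,\Psi}f(0)$ and $\mathcal{R}_\Psi^\rho(t,0)=\frac{(\Psi(t)-\Psi(0))^{\rho-1}}{\Gamma(\rho)}$, this is precisely statement (i).

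For (ii) I would argue symmetrically. Writing $g=\mathbb{I}_{0^+}^{\alpha;\,\Psi}f$ and merging the inner integral via $(1-\beta)(1-\alpha)+\alpha=1-\beta(1-\alpha)$, with $\mu:=\beta(1-\alpha)$ one gets
\[
{^\mathcal{H}\mathbb{D}^{\alpha,\beta;\,\Psi}_{0^+}}\mathbb{I}_{0^+}^{\alpha;\,\Psi}f
=\mathbb{I}_{0^+}^{\mu;\,\Psi}D\,\mathbb{I}_{0^+}^{1-\mu;\,\Psi}f.
\]
Applying the same commutation identity with $\rho$ replaced by $\mu$ and $G=\mathbb{I}_{0^+}^{1-\mu;\,\Psi}f$ yields $f$ minus the boundary term $\frac{(\Psi(t)-\Psi(0))^{\mu-1}}{\Gamma(\mu)}\mathbb{I}_{0^+}^{1-\mu;\,\Psi}f(0)$; since $1-\mu>0$, under the standing regularity of $f$ the $\Psi$-fractional integral vanishes at the lower terminal, $\mathbb{I}_{0^+}^{1-\mu;\,\Psi}f(0)=0$, the boundary term drops, and (ii) follows.

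The main obstacle is the commutation identity itself: a naive integration by parts in $\mathbb{I}_{0^+}^{\rho;\,\Psi}DG=\frac{1}{\Gamma(\rho)}\int_0^t(\Psi(t)-\Psi(s))^{\rho-1}G'(s)\,ds$ fails because the kernel is singular at $s=t$ when $\rho<1$, so the endpoint contribution cannot be read off directly. I would instead establish it through the change of variable $x=\Psi(t),\ v=\Psi(s)$, under which $D=\frac{d}{dx}$ and $\mathbb{I}_{0^+}^{\rho;\,\Psi}$ becomes the classical Riemann--Liouville integral $\frac{1}{\Gamma(\rho)}\int_{\Psi(0)}^{x}(x-v)^{\rho-1}\widetilde G(v)\,dv$ of $\widetilde G(v):=G(\Psi^{-1}(v))$; the identity then reduces to the classical Leibniz differentiation-under-the-integral rule, whose endpoint term reproduces exactly $\frac{(\Psi(t)-\Psi(0))^{\rho-1}}{\Gamma(\rho)}G(0)$. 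This same substitution simultaneously justifies the semigroup law and the power rule quoted before the theorem, so it is the one technical point I would carry out in full.
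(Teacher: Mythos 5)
The paper does not prove this statement; it is imported verbatim from \cite{Sousa1}, so there is no in-paper proof to compare against. Your argument is correct and is essentially the standard one from that source: collapse the outer integrals by the semigroup law (using $\alpha+\beta(1-\alpha)=\rho$ and $(1-\beta)(1-\alpha)=1-\rho$), reduce to the composition formula $\mathbb{I}_{0^+}^{\rho;\,\Psi}\bigl(\tfrac{1}{\Psi'}\tfrac{d}{dt}\bigr)\mathbb{I}_{0^+}^{1-\rho;\,\Psi}f=f-\mathcal{R}_\Psi^\rho(t,0)\,\mathbb{I}_{0^+}^{1-\rho;\,\Psi}f(0)$, and transport everything to the classical Riemann--Liouville setting via $x=\Psi(t)$. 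The one point you rightly flag as delicate --- the endpoint term in differentiating $\int_{\Psi(0)}^{x}(x-v)^{\rho-1}\widetilde G(v)\,dv$ when $\rho<1$ --- is handled cleanly by first substituting $u=x-v$ so the singularity sits at a fixed endpoint, after which Leibniz applies and yields exactly $\frac{(x-\Psi(0))^{\rho-1}}{\Gamma(\rho)}\widetilde G(\Psi(0))$; with that carried out (and the degenerate cases $\beta=0$, $\beta=1$ read off directly), the proof is complete.
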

Consider the weighted space \cite{Sousa1} defined by
\begin{small}
 $$
C_{1-\rho;\, \Psi}(J,\,\mathbb{R})=\left\{u:(0,b]\to\mathbb{R} : ~(\Psi(t)-\Psi(0))^{1-\rho}u(t)\in C(J,\,\mathbb{R})\right\},~0< \rho\leq 1.
$$
\end{small}
Define the weighted space of piecewise continuous functions as
\begin{small}
\begin{align*}
{\mathcal{PC}}_{1-\rho; \,  \Psi}(J,\mathbb{R}) =  & \left\lbrace u:(0,b] \to\mathbb{R} :u\in C_{1-\rho;\, \Psi}((t_k,t_{k+1}],\mathbb{R}),~k=0,1,\cdots, p,\,\mathbb{I}_{0^+}^{1-\rho; \, \Psi} \, u(t_k^+), \right.\\
&  \left.~~ \mathbb{I}_{0^+}^{1-\rho; \, \Psi}\, u(t_k^-) ~\mbox{exists and}  ~\mathbb{I}_{0^+}^{1-\rho; \, \Psi}\, u(t_k^-)= \mathbb{I}_{0^+}^{1-\rho; \, \Psi} \, u(t_k) ~\mbox{for} ~ \, k = 1,\cdots, p \right\rbrace .
\end{align*}
\end{small}
Then $\mathcal{\mathcal{PC}}_{1-\rho; \, \Psi}(J,\mathbb{R})$ is a Banach space with the norm
\begin{small}
$
\|u\|_{\mathcal{\mathcal{PC}}_{1-\rho; \,  \Psi}(J,\mathbb{R})} = \sup_ {t\in J} \left|(\Psi(t)-\Psi(0))^{1-\rho}u(t)\right|.
$
\end{small}
Observe that for $\rho =1$, the space  
 $\mathcal{\mathcal{PC}}_{1-\rho; \,  \Psi}(J,\mathbb{R})$ reduces to $\mathcal{\mathcal{PC}}(J,\mathbb{R})$ which is dealt in \cite{fec,Benchohra3, Bai}.
 
  Next, we introduce the space 
 \begin{small}
 $$
 \mathbb{X}_{\mathcal{C},\,\rho,\,\Psi}=\{u:[-r,b]\to \mathbb{R}: u\in \mathcal{C}\cap \mathcal{\mathcal{PC}}_{1-\rho; \,  \Psi}(J,\mathbb{R}) \},
 $$
 \end{small}
with the norm 
\begin{small}
$
\|u\|_{\mathbb{X}_{\mathcal{C},\,\rho,\,\Psi}}=\max \left\{\|u\|_{\mathcal{C}},\, \|u\|_{\mathcal{\mathcal{PC}}_{1-\rho; \,  \Psi}(J,\mathbb{R})}\right\}.
$
\end{small}
One can verify that 
\begin{small}
( $\mathbb{X}_{\mathcal{C},\,\rho,\,\Psi},\,\|\cdot \|_{\mathbb{X}_{\mathcal{C},\,\rho,\,\Psi}}$) 
\end{small}
is a Banach space.

For $v\in \mathbb{X}_{\mathcal{C},\,\rho,\,\Psi} $ and $ \epsilon >0$ consider the following inequalities
\begin{small}
\begin{align}\label{HU1}
\begin{cases}
\left| ^\mathcal{H} \mathbb{D}^{\alpha,\beta;\,\Psi}_{0^+}  v(t)-f\left(t, v(t), v(h(t)), ^\mathcal{H} \mathbb{D}^{\alpha,\, \beta; \, \Psi}_{0^+}v(t) \right)\right| \leq \epsilon\, E_\alpha\left((\Psi(t)-\Psi(0))^\alpha \right),~ t \in J, \\
 |\Delta \mathbb{I}_{0^+}^{1-\rho; \, \Psi}v(t_k)- \mathcal{J}_k(v(t_k^-))| \leq \epsilon, ~k=1,\cdots,p, 
\end{cases}
\end{align}
\end{small}
where $E_\alpha$ is the Mittag--Leffler function \cite{Kilbas} defined by
\begin{small}
$$
E_\alpha(z)=\sum_{n=0}^{\infty}\frac{z^n}{\Gamma(n\alpha+1)},\, z\in \mathbb{C},\, Re(\alpha)>0.
$$
\end{small}
To examine the Ulam--Hyers--Mittag--Leffler (UHML) stability of the problem \eqref{p1}--\eqref{p2} we adopt the definitions given by Wang et. al. \cite{Zhang} and Liu et. al. \cite{Liu3}.
\begin{definition}
The equation \eqref{p1}--\eqref{p2} is said to be UHML stable with respect to $E_\alpha\left(\zeta_{f,\Psi} (\Psi(t)-\Psi(0))^\alpha \right)$, if for $\epsilon>0$ there exists a constant $C_{p,E_\alpha}>0$ and $\zeta_{f,\Psi}>0$such that, for every solution $v\in \mathbb{X}_{\mathcal{C},\,\rho,\,\Psi} $ of the inequality \eqref{HU1}, there is a unique solution $u\in \mathbb{X}_{\mathcal{C},\,\rho,\,\Psi}$ to the problem \eqref{p1}--\eqref{p4} satisfying
\begin{small} 
\begin{align*}
\begin{cases}
|u(t)-v(t)|=0,\, t\in [-r,0],  \\
 (\Psi(t)-\Psi(0))^{1-\rho}|u(t)-v(t)| \leq C_{p,E_\alpha}\,\epsilon \, E_\alpha\left(\zeta_{f,\Psi} (\Psi(t)-\Psi(0))^\alpha \right), ~\zeta_{f,\Psi}>0,~t\in J. 
\end{cases}
\end{align*}
\end{small}
 \end{definition}
  \begin{rem}\label{rm}
  We say that $v\in  \mathbb{X}_{\mathcal{C},\,\rho,\,\Psi} $ is the solution of the inequality \eqref{HU1} if there exist a function $\mathcal{E} \in  \mathbb{X}_{\mathcal{C},\,\rho,\,\Psi}$ and a sequence $\{\mathcal{E}_k\},\,k=1,\cdots,p$ (both depending on $v$) such that
  \begin{small}
  \begin{itemize}[topsep=0pt,itemsep=-1ex,partopsep=1ex,parsep=1ex]
 \item[\rm (1)]$|\mathcal{E}(t)|\leq \epsilon\,E_\alpha\left(\zeta_{f,\Psi} (\Psi(t)-\Psi(0))^\alpha \right),\, t\in J,\,\,|\mathcal{E}_k|\leq \epsilon,\,  ~k=1,\cdots,p, $
 \item[\rm (2)] $^\mathcal{H} \mathbb{D}^{\alpha,\, \beta; \, \Psi}_{0^+}v(t)=f\left(t, v(t), v(h(t)), ^\mathcal{H} \mathbb{D}^{\alpha,\, \beta; \, \Psi}_{0^+}v(t) \right)+\mathcal{E}(t),\,t\in J,$
 \item[\rm (3)]$\Delta \mathbb{I}_{0^+}^{1-\rho; \, \Psi}v(t_k)= \mathcal{J}_k(v(t_k^-))+\mathcal{E}_k,  ~k = 1,\cdots,p.$
  \end{itemize}
  \end{small}
  \end{rem}
 \begin{definition}[\cite{zhou}]
Let $(\mathcal{X},\rho)$ be a metric space. The operator $\mathcal{T}:\mathcal{X}\to \mathcal{X}$ is a Picard operator if there exists $x^* \in \mathcal{X}$ such that 
\begin{small}
\begin{itemize}[topsep=0pt,itemsep=-1ex,partopsep=1ex,parsep=1ex]
\item[1.] $\mathcal{F}_\mathcal{T}=\{x^*\}$, where $\mathcal{F}_\mathcal{T}=\{x\in \mathcal{X}: \mathcal{T}(x)=x\}$ ;
\item[2.] the sequence $\{\mathcal{T}^n (x_0)\}_{n\in \mathbb{N}}$ converges to $x^*$ for all $x_0 \in \mathcal{X}$.
\end{itemize}
\end{small}
 \end{definition}
 \begin{lemma}[\cite{rus}]\label{AGL} 
Let $(\mathcal{X},\rho, \leq)$ be an ordered metric space and let $\mathcal{T}:\mathcal{X}\to \mathcal{X}$ be an increasing Picard operator with $\mathcal{F}_\mathcal{T}=\{x_{\mathcal{T}}^*\}$. Then for any $x\in \mathcal{X}, x\leq \mathcal{T}(x)$ implies $x\leq x_{\mathcal{T}}^*$. 
 \end{lemma}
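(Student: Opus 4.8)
The plan is to exploit the interplay between the monotonicity of $\mathcal{T}$ and its Picard character, and then to close the argument by passing to the limit along the orbit of $x$. First I would iterate the hypothesis: starting from $x \leq \mathcal{T}(x)$ and applying the increasing operator $\mathcal{T}$ to both sides gives $\mathcal{T}(x) \leq \mathcal{T}^2(x)$, whence by transitivity of $\leq$ one obtains $x \leq \mathcal{T}^2(x)$. A straightforward induction on $n$ then produces the monotone chain $x \leq \mathcal{T}(x) \leq \mathcal{T}^2(x) \leq \cdots \leq \mathcal{T}^n(x)$ for every $n \in \mathbb{N}$; the inductive step uses only that $\mathcal{T}$ is increasing together with transitivity, so that $x \leq \mathcal{T}^n(x)$ for all $n$.

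Next I would invoke the Picard property. Since $\mathcal{T}$ is a Picard operator with $\mathcal{F}_\mathcal{T} = \{x_{\mathcal{T}}^*\}$, the orbit $\{\mathcal{T}^n(x)\}_{n \in \mathbb{N}}$ converges in $(\mathcal{X},\rho)$ to the unique fixed point $x_{\mathcal{T}}^*$, taking $x_0 = x$ as the starting point in the definition. Thus I have a convergent sequence $\mathcal{T}^n(x) \to x_{\mathcal{T}}^*$, each term of which dominates $x$ in the order.

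The final and crucial step is to pass to the limit in the inequalities $x \leq \mathcal{T}^n(x)$. This is where the structure of the ordered metric space enters: the order $\leq$ is taken to be compatible with the metric topology, in the sense that its graph is closed, i.e. whenever $y_n \to y$ in $(\mathcal{X},\rho)$ and $x \leq y_n$ for all $n$, one has $x \leq y$. Applying this with $y_n = \mathcal{T}^n(x)$ and $y = x_{\mathcal{T}}^*$ yields $x \leq x_{\mathcal{T}}^*$, which completes the argument.

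I expect the limit passage to be the only genuine obstacle, and it is not a computation but a structural hypothesis: the conclusion would fail without some closedness of $\leq$ under metric convergence. In the framework of Rus this closedness is part of the working notion of an ordered metric space, so the proof reduces to the monotone iteration above combined with the convergence of the Picard orbit, and no explicit estimates involving the metric $\rho$ beyond this convergence are required.
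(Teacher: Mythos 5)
Your argument is correct and is the standard proof of this lemma; note that the paper itself gives no proof, citing Rus, and what you have written is essentially Rus's own argument. The monotone iteration $x\leq \mathcal{T}^n(x)$ followed by convergence of the Picard orbit to $x_{\mathcal{T}}^*$ is exactly right, and you correctly isolate the one genuine hypothesis needed to pass to the limit: that the partial order has closed graph with respect to the metric convergence, which in Rus's framework of ordered metric spaces (ordered $L$-spaces) is built into the definition. No gaps.
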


\begin{lemma}[\cite{Jose}]\label{JI}
Let $ \mathcal{U}\in {\mathcal{PC}}_{1-\rho;\, \psi}\left( J,\,\mathbb{R}\right) $ satisfy the following inequality
\begin{small}
$$
\mathcal{U}(t)\leq \mathcal{V}(t)+ \mathbf{g}(t)\,\int_{0}^{t}\mathcal{A}_\Psi^\alpha(t,s)\,\mathcal{U}(s)\, ds + \sum_{0<t_k<t}\beta_k \mathcal{U}(t_k^-),\, t>a,
$$
\end{small}
where $\mathbf{g}$ is a continuous function, $\mathcal{V}\in {\mathcal{PC}}_{1-\rho;\, \psi}\left( J,\,\mathbb{R}\right) $ is non-negative, $\beta_k > 0$ for $k=1,\cdots,p,$ then we have
\begin{small}
\begin{align*}
\mathcal{U}(t)\leq \mathcal{V}(t) &\left[\prod_{i=1}^{k}\left\lbrace 1+\beta_i E_\alpha \left(\mathbf{g}(t)\Gamma(\alpha)(\Psi(t_i)-\Psi(0))^\alpha \right)  \right\rbrace  \right] E_\alpha\left(\mathbf{g}(t)\Gamma(\alpha)(\Psi(t)-\Psi(0))^\alpha \right),\,\, t\in (t_k,t_{k+1}].
\end{align*} 
\end{small}
\end{lemma}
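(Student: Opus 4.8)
The plan is to split the argument into a continuous (impulse-free) generalized Gronwall estimate on a single subinterval and an induction over the intervals $(t_k,t_{k+1}]$ that accumulates the jump contributions into the product. Throughout I treat $\mathbf{g}$ and $\mathcal{V}$ as nondecreasing, so that $\mathbf{g}(s)\le \mathbf{g}(t)$ and $\mathcal{V}(t_i)\le\mathcal{V}(t)$ for $s,t_i\le t$; this lets me freeze the coefficient $\mathbf{g}(t)$ and pull $\mathcal{V}(t)$ out of the jump sum, which is the device that produces the clean Mittag--Leffler form.

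First, the continuous estimate. Fix $t$ and set $B\phi(\tau)=\mathbf{g}(t)\int_0^\tau \mathcal{A}_\Psi^\alpha(\tau,s)\phi(s)\,ds$, the $\Psi$--Riemann--Liouville operator (scaled by $\Gamma(\alpha)\mathbf{g}(t)$) restricted to $[0,t]$. On the first interval $(0,t_1]$ the jump sum is empty, so $\mathcal{U}\le \mathcal{V}+B\mathcal{U}$, and iterating gives $\mathcal{U}\le\sum_{n=0}^{N-1}B^n\mathcal{V}+B^N\mathcal{U}$. The heart of this step is the bound
$$B^n\mathcal{V}(t)\le \mathcal{V}(t)\,\frac{\big(\mathbf{g}(t)\Gamma(\alpha)(\Psi(t)-\Psi(0))^\alpha\big)^n}{\Gamma(n\alpha+1)},$$
which I would prove by induction on $n$, each step invoking the power rule $\mathbb{I}_{0^+}^{\alpha;\,\Psi}(\Psi(t)-\Psi(0))^{\delta-1}=\frac{\Gamma(\delta)}{\Gamma(\alpha+\delta)}(\Psi(t)-\Psi(0))^{\alpha+\delta-1}$ recorded after the definition of $\mathbb{I}_{0^+}^{\alpha;\,\Psi}$, together with $\mathcal{V}(s)\le\mathcal{V}(t)$. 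Since $\Gamma(n\alpha+1)$ grows super-factorially, $B^N\mathcal{U}\to 0$ uniformly on $[0,t]$; summing the series yields $\sum_{n=0}^\infty B^n\mathcal{V}(t)=\mathcal{V}(t)E_\alpha\big(\mathbf{g}(t)\Gamma(\alpha)(\Psi(t)-\Psi(0))^\alpha\big)$, which is exactly the asserted bound on $(0,t_1]$ (empty product). Running the identical iteration with the nondecreasing forcing term $a(t):=\mathcal{V}(t)+\sum_{0<t_i<t}\beta_i\,\mathcal{U}(t_i^-)$ in place of $\mathcal{V}(t)$ (the jump sum is constant in $t$ on each subinterval, hence rides along as forcing) gives, for every $t$,
$$\mathcal{U}(t)\le \Big(\mathcal{V}(t)+\sum_{0<t_i<t}\beta_i\,\mathcal{U}(t_i^-)\Big)\,E_\alpha\big(\mathbf{g}(t)\Gamma(\alpha)(\Psi(t)-\Psi(0))^\alpha\big).$$

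Next, the impulsive induction. Write $g_i:=\mathbf{g}(t)\Gamma(\alpha)(\Psi(t_i)-\Psi(0))^\alpha$ and $P_k:=\prod_{i=1}^k\{1+\beta_i E_\alpha(g_i)\}$, and prove by induction on $k$ that the stated bound holds on $(t_k,t_{k+1}]$. The base case $k=0$ is the continuous estimate above. For the inductive step, each left limit in $a(t)$ lies in an earlier interval $(t_{i-1},t_i]$, so the induction hypothesis and monotonicity give $\mathcal{U}(t_i^-)\le \mathcal{V}(t_i)P_{i-1}E_\alpha(g_i)\le \mathcal{V}(t)P_{i-1}E_\alpha(g_i)$ (using $\mathcal{V}(t_i)\le\mathcal{V}(t)$ and $\mathbf{g}(t_i)\le\mathbf{g}(t)$). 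Substituting into the displayed estimate,
$$\mathcal{U}(t)\le \mathcal{V}(t)\Big(1+\sum_{i=1}^k\beta_i P_{i-1}E_\alpha(g_i)\Big)\,E_\alpha\big(\mathbf{g}(t)\Gamma(\alpha)(\Psi(t)-\Psi(0))^\alpha\big).$$
The final ingredient is the algebraic identity $P_k=1+\sum_{i=1}^k\beta_i P_{i-1}E_\alpha(g_i)$, itself a one-line induction from $P_k=P_{k-1}\{1+\beta_k E_\alpha(g_k)\}$; it converts the parenthesised factor into $\prod_{i=1}^k\{1+\beta_i E_\alpha(g_i)\}$ and closes the induction.

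The main obstacle is the first (continuous) step: justifying the term-by-term iteration of $B$ and in particular proving the $\Gamma(n\alpha+1)$-denominator bound for $B^n\mathcal{V}$, so that the remainder $B^N\mathcal{U}$ vanishes and the Mittag--Leffler series converges. This is where the $\Psi$--fractional power rule is invoked repeatedly and where the monotonicity of $\mathbf{g}$ (freezing $\mathbf{g}(t)$) and of $\mathcal{V}$ are genuinely used. The subsequent impulsive bookkeeping, although notationally heavy because the integral $\int_0^t$ runs across the earlier jump points, is routine once the single-interval estimate and the product identity $P_k=1+\sum\beta_i P_{i-1}E_\alpha(g_i)$ are in hand.
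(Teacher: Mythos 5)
The paper does not prove this lemma at all: it is imported verbatim from \cite{Jose}, so there is no in-paper proof to compare against. Your argument is the standard one and, as far as I can tell, it is correct: the iterated-kernel estimate $B^{n}\mathcal{V}(t)\le \mathcal{V}(t)\,\bigl(\mathbf{g}(t)\Gamma(\alpha)(\Psi(t)-\Psi(0))^{\alpha}\bigr)^{n}/\Gamma(n\alpha+1)$ via the $\Psi$-fractional power rule, the vanishing of the remainder $B^{N}\mathcal{U}$ (which needs only the weighted boundedness of $\mathcal{U}$ in ${\mathcal{PC}}_{1-\rho;\,\Psi}$), the observation that the jump sum is a locally constant nondecreasing forcing term, and the telescoping identity $P_{k}=1+\sum_{i=1}^{k}\beta_{i}P_{i-1}E_{\alpha}(g_{i})$ together give exactly the stated bound; this is also how the cited source argues. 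The one point worth flagging is that you explicitly assume $\mathbf{g}$ nonnegative and nondecreasing and $\mathcal{V}$ nondecreasing (and implicitly $\mathcal{U}\ge 0$ so that the forcing $a(t)$ is nondecreasing), whereas the lemma as reproduced in the paper only says ``$\mathbf{g}$ continuous'' and ``$\mathcal{V}$ non-negative.'' Those extra hypotheses are genuinely needed for your freezing of $\mathbf{g}(t)$ and for pulling $\mathcal{V}(t)$ out of the sums, and they do appear in the literature version of the inequality; the paper's statement is simply incomplete (note also the stray ``$t>a$''). Since in the application of the lemma in Theorem \ref{exi} both $\mathbf{g}$ and $\mathcal{V}$ are constants, nothing downstream is affected.
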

\section{Formula of solutions}
We need the following lemma to derive the equivalent fractional integral of the the impulsive problem \eqref{p1}-\eqref{p4}.
\begin{lemma} [\cite{JK}]\label{JK1}
Let $0<\alpha<1$ and $ h: \mathcal{J}\to \mathbb{R} $ be continuous. Then for any $b\in \mathcal{J} $ a function $u\in C_{1-\rho,\psi}\left( \mathcal{J},\,\mathbb{R}\right) $ defined  by 
\begin{small}
\begin{equation}\label{JK}
u(t)= \mathcal{R}_\Psi^\rho (t,0) \, \left. \left\{\mathbb{I}_{0^+}^{{1-\rho}; \, {\psi}}u(b)-\mathbb{I}_{0^+}^{1-\rho+\alpha; \, \psi}h(t)\right|_{t=b} \right\}+\mathbb{I}_{0^+}^{\alpha; \, \psi}h(t),
\end{equation}
\end{small}
is the solution of the $\psi$--Hilfer fractional differential equation $^\mathcal{H} \mathbb{D}^{\alpha, \, \beta; \, \psi}_{0^+}u(t)=h(t),~t \in \mathcal{J}.$
\end{lemma}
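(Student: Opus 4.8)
The plan is to verify directly that the function $u$ given in \eqref{JK} satisfies ${}^\mathcal{H}\mathbb{D}^{\alpha,\beta;\Psi}_{0^+}u(t)=h(t)$, exploiting the linearity of the $\Psi$-Hilfer derivative (it is a composition of the linear operators $\mathbb{I}_{0^+}^{\,\cdot\,;\Psi}$ and $\tfrac{1}{\Psi'(t)}\tfrac{d}{dt}$). First I would observe that the bracketed quantity
$C:=\mathbb{I}_{0^+}^{1-\rho;\Psi}u(b)-\mathbb{I}_{0^+}^{1-\rho+\alpha;\Psi}h(t)\big|_{t=b}$
is a genuine constant, independent of $t$, so that \eqref{JK} has the structure $u(t)=C\,\mathcal{R}_\Psi^\rho(t,0)+\mathbb{I}_{0^+}^{\alpha;\Psi}h(t)$. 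Applying ${}^\mathcal{H}\mathbb{D}^{\alpha,\beta;\Psi}_{0^+}$ termwise then reduces the claim to two independent computations.

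For the second term, Theorem \ref{ab}(ii) gives at once ${}^\mathcal{H}\mathbb{D}^{\alpha,\beta;\Psi}_{0^+}\mathbb{I}_{0^+}^{\alpha;\Psi}h(t)=h(t)$. The crux is therefore to show that the derivative annihilates the weight $\mathcal{R}_\Psi^\rho(t,0)=\tfrac{(\Psi(t)-\Psi(0))^{\rho-1}}{\Gamma(\rho)}$. Using the definition of the $\Psi$-Hilfer derivative, I would first apply $\mathbb{I}_{0^+}^{(1-\beta)(1-\alpha);\Psi}$ to $(\Psi(t)-\Psi(0))^{\rho-1}$ via the fractional-integral power rule recorded in Section \ref{preliminaries}; the decisive arithmetic is that $(1-\beta)(1-\alpha)+\rho=1$ (since $\rho=\alpha+\beta-\alpha\beta$), so this fractional integral collapses to the constant $\Gamma(\rho)$. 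The inner operator $\tfrac{1}{\Psi'(t)}\tfrac{d}{dt}$ then sends this constant to $0$, and the final application of $\mathbb{I}_{0^+}^{\beta(1-\alpha);\Psi}$ leaves $0$. Hence ${}^\mathcal{H}\mathbb{D}^{\alpha,\beta;\Psi}_{0^+}\mathcal{R}_\Psi^\rho(t,0)=0$, and combining the two pieces yields ${}^\mathcal{H}\mathbb{D}^{\alpha,\beta;\Psi}_{0^+}u(t)=C\cdot 0+h(t)=h(t)$.

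It remains to confirm the membership $u\in C_{1-\rho,\Psi}(\mathcal{J},\mathbb{R})$ and the self-consistency of \eqref{JK}. For the former I would multiply $u$ by $(\Psi(t)-\Psi(0))^{1-\rho}$: the weight times $\mathcal{R}_\Psi^\rho(t,0)$ is the constant $1/\Gamma(\rho)$, while $(\Psi(t)-\Psi(0))^{1-\rho}\mathbb{I}_{0^+}^{\alpha;\Psi}h(t)$ is continuous on $\mathcal{J}$ and tends to $0$ as $t\to 0^+$ because $h$ is bounded and $\alpha>0$ (the standard mapping property of $\mathbb{I}_{0^+}^{\alpha;\Psi}$ into the weighted space). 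For self-consistency, applying $\mathbb{I}_{0^+}^{1-\rho;\Psi}$ to \eqref{JK} and using $\mathbb{I}_{0^+}^{1-\rho;\Psi}\mathcal{R}_\Psi^\rho(t,0)=1$ together with the semigroup law $\mathbb{I}_{0^+}^{1-\rho;\Psi}\mathbb{I}_{0^+}^{\alpha;\Psi}=\mathbb{I}_{0^+}^{1-\rho+\alpha;\Psi}$ returns $\mathbb{I}_{0^+}^{1-\rho;\Psi}u(t)=C+\mathbb{I}_{0^+}^{1-\rho+\alpha;\Psi}h(t)$, which at $t=b$ reproduces exactly the constant $C$, confirming that the formula is well posed.

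The main obstacle I anticipate is not conceptual but bookkeeping in the annihilation step: one must apply the three nested operators in the definition of ${}^\mathcal{H}\mathbb{D}^{\alpha,\beta;\Psi}_{0^+}$ in the correct order, and invoke the exponent identity $(1-\beta)(1-\alpha)+(\rho-1)=0$ precisely, so that the power rule delivers a constant whose $\Psi$-derivative vanishes. Everything else follows routinely from linearity, Theorem \ref{ab}(ii), and the power rule.
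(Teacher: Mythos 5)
Your argument is correct, but note that the paper itself offers no proof to compare against: Lemma \ref{JK1} is imported verbatim from \cite{JK} and used as a black box. Judged on its own terms, your verification is the natural one and all the key computations check out: the bracketed quantity is indeed a constant $C$; the exponent identity $(1-\beta)(1-\alpha)=1-\rho$ makes $\mathbb{I}_{0^+}^{(1-\beta)(1-\alpha);\Psi}(\Psi(t)-\Psi(0))^{\rho-1}$ collapse to the constant $\Gamma(\rho)$, so the $\Psi$-Hilfer derivative annihilates $\mathcal{R}_\Psi^\rho(t,0)$ (this is exactly the identity $^\mathcal{H}\mathbb{D}^{\alpha,\beta;\Psi}_{0^+}(\Psi(t)-\Psi(0))^{\rho-1}=0$ that the paper later quotes from \cite{Sousa2}); and part (ii) of Theorem \ref{ab} handles the $\mathbb{I}_{0^+}^{\alpha;\Psi}h$ term. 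Your self-consistency check is also the right thing to do, since the formula \eqref{JK} contains $u$ on both sides through $\mathbb{I}_{0^+}^{1-\rho;\Psi}u(b)$; verifying that applying $\mathbb{I}_{0^+}^{1-\rho;\Psi}$ and evaluating at $b$ reproduces $C$ is what makes the statement well posed (in the application, that value is prescribed by the impulse condition). Two minor caveats worth recording: Theorem \ref{ab} is stated for $f\in C^1$ whereas $h$ is only assumed continuous, so strictly you need the version of $^\mathcal{H}\mathbb{D}^{\alpha,\beta;\Psi}_{0^+}\mathbb{I}_{0^+}^{\alpha;\Psi}h=h$ valid for continuous $h$ (the paper glosses over the same point); and the termwise application of the derivative presupposes linearity of the composed operator on the sum, which is harmless here but should be said once. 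Neither affects the substance.
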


\begin{theorem}\label{fie}
A function $u \in \mathbb{X}_{\mathcal{C},\,\rho,\,\Psi}$ is a solution of implicit impulsive \eqref{p1}-\eqref{p4} if and only if u is a solution of the following fractional integral equation
\begin{small}
\begin{equation}\label{ie1}
u(t) =
\begin{cases}
\phi(t),~ t\in [-r,0], \\
 \mathcal{R}_\Psi^\rho (t,0) \, \, \left(u_0 +\sum_{0<t_k <t}\mathcal{J}_k(u(t_k^-))\right)+\mathbb{I}_{0^+}^{\alpha; \, \psi}g_u(t), ~t\in J,  
\end{cases}
\end{equation}
\end{small}
where
\begin{small}
$$
g_u(t)=f\left( t,u(t), u(h(t)),g_u(t)\right) .
$$
\end{small}
\end{theorem}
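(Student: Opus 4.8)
The plan is to prove the two implications separately, after first absorbing the implicitness of \eqref{p1} into the auxiliary function $g_u(t)=f\!\left(t,u(t),u(h(t)),g_u(t)\right)$, so that on each subinterval $u$ solves the linear problem ${}^\mathcal{H}\mathbb{D}_{0^+}^{\alpha,\beta;\Psi}u(t)=g_u(t)$. The two workhorses are Theorem \ref{ab}, which inverts the $\Psi$-Hilfer derivative against the $\Psi$--Riemann--Liouville integral, and Lemma \ref{JK1}, which represents a solution of ${}^\mathcal{H}\mathbb{D}_{0^+}^{\alpha,\beta;\Psi}u=h$ in terms of the value of $\mathbb{I}_{0^+}^{1-\rho;\Psi}u$ at a single reference point. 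The elementary identity that makes everything fit is $(1-\beta)(1-\alpha)=1-(\alpha+\beta-\alpha\beta)=1-\rho$, so the boundary term in Theorem \ref{ab}(i) is exactly $\mathbb{I}_{0^+}^{1-\rho;\Psi}u(0)=u_0$; I will also use $\mathbb{I}_{0^+}^{\alpha;\Psi}(\Psi(\cdot)-\Psi(0))^{\delta-1}=\frac{\Gamma(\delta)}{\Gamma(\alpha+\delta)}(\Psi(\cdot)-\Psi(0))^{\alpha+\delta-1}$, which gives $\mathbb{I}_{0^+}^{1-\rho;\Psi}\mathcal{R}_\Psi^\rho(\cdot,0)\equiv 1$, together with the semigroup law $\mathbb{I}_{0^+}^{1-\rho;\Psi}\mathbb{I}_{0^+}^{\alpha;\Psi}=\mathbb{I}_{0^+}^{1-\rho+\alpha;\Psi}$.

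For necessity, I would first restrict to $(0,t_1]$, apply $\mathbb{I}_{0^+}^{\alpha;\Psi}$ to \eqref{p1}, and invoke Theorem \ref{ab}(i) with \eqref{p3} to obtain $u(t)=\mathcal{R}_\Psi^\rho(t,0)\,u_0+\mathbb{I}_{0^+}^{\alpha;\Psi}g_u(t)$, which is \eqref{ie1} with empty sum. Then I would induct on $k$: assuming \eqref{ie1} on $(t_{k-1},t_k]$, I compute $\mathbb{I}_{0^+}^{1-\rho;\Psi}u(t_k^-)$, use the jump relation \eqref{p2} to pass to $\mathbb{I}_{0^+}^{1-\rho;\Psi}u(t_k^+)=\mathbb{I}_{0^+}^{1-\rho;\Psi}u(t_k^-)+\mathcal{J}_k(u(t_k^-))$, and feed this as the reference value into Lemma \ref{JK1} on $(t_k,t_{k+1}]$. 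The decisive point is that the correction term $\mathbb{I}_{0^+}^{1-\rho+\alpha;\Psi}g_u(t_k)$ generated inside \eqref{JK} cancels against the one carried by the reference value, so the coefficient of $\mathcal{R}_\Psi^\rho(t,0)$ reduces to the accumulated jumps $u_0+\sum_{0<t_j<t}\mathcal{J}_j(u(t_j^-))$, producing \eqref{ie1}.

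For sufficiency, I would start from \eqref{ie1} and apply ${}^\mathcal{H}\mathbb{D}_{0^+}^{\alpha,\beta;\Psi}$ on each $(t_k,t_{k+1}]$: by Theorem \ref{ab}(ii) the term $\mathbb{I}_{0^+}^{\alpha;\Psi}g_u$ returns $g_u$, while $\mathcal{R}_\Psi^\rho(\cdot,0)$, being a constant multiple of $(\Psi(t)-\Psi(0))^{\rho-1}$, is annihilated by the $\Psi$-Hilfer derivative, so ${}^\mathcal{H}\mathbb{D}_{0^+}^{\alpha,\beta;\Psi}u(t)=g_u(t)=f\!\left(t,u(t),u(h(t)),g_u(t)\right)$, i.e. \eqref{p1}. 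To recover \eqref{p3} and \eqref{p2}, I apply $\mathbb{I}_{0^+}^{1-\rho;\Psi}$ to \eqref{ie1}: letting $t\to0^+$ kills the $\mathbb{I}_{0^+}^{1-\rho+\alpha;\Psi}g_u$ contribution and leaves $u_0$, while comparing the one-sided limits at each $t_k$ isolates the single new summand $\mathcal{J}_k(u(t_k^-))$, which is exactly $\Delta\mathbb{I}_{0^+}^{1-\rho;\Psi}u(t_k)$. On $[-r,0]$ both forms reduce to $\phi(t)$ by \eqref{p4}, completing the equivalence.

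The main obstacle is the nonlocal, $0$-anchored character of $\mathbb{I}_{0^+}^{\alpha;\Psi}$ and ${}^\mathcal{H}\mathbb{D}_{0^+}^{\alpha,\beta;\Psi}$ interacting with the impulsive jumps: the representation of $u$ changes across the points $t_k$, yet these operators integrate over all of $(0,t)$, so the evaluation of $\mathbb{I}_{0^+}^{1-\rho;\Psi}u(t_k^\pm)$ must be done carefully, and the local use of Lemma \ref{JK1} on $(t_k,t_{k+1}]$ (whose operators are still anchored at $0$) has to be justified so that the telescoping cancellation of the $\mathbb{I}_{0^+}^{1-\rho+\alpha;\Psi}g_u(t_k)$ terms is legitimate. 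This bookkeeping across the impulse points, rather than any single delicate estimate, is where the real work lies.
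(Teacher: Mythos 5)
Your proposal is correct and follows essentially the same route as the paper: absorb the implicitness into $g_u$, derive the formula on $(0,t_1]$ from the inversion identity, propagate across each impulse via Lemma \ref{JK1} together with the cancellation $\mathbb{I}_{0^+}^{1-\rho;\Psi}u(t_k^-)-\mathbb{I}_{0^+}^{1-\rho+\alpha;\Psi}g_u(t)\big|_{t=t_k}=u_0+\sum_{0<t_j<t_k}\mathcal{J}_j(u(t_j^-))$, and recover \eqref{p1}--\eqref{p3} conversely by applying ${}^\mathcal{H}\mathbb{D}_{0^+}^{\alpha,\beta;\Psi}$ and $\mathbb{I}_{0^+}^{1-\rho;\Psi}$ and comparing one-sided limits at each $t_k$. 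The only cosmetic difference is that you phrase the forward direction as an explicit induction where the paper writes ``continuing in this manner.''
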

\begin{proof}
Assume that $u \in  \mathbb{X}_{\mathcal{C},\,\rho,\,\Psi}$ satisfies the implicit impulsive $\psi$--HFDE \eqref{p1}-\eqref{p4}.
If $t\in [0,t_1]$ then 
\begin{small}
\begin{equation} \label{ie2}
\begin{cases}
^\mathcal{H} \mathbb{D}^{\alpha, \, \beta; \, \psi}_{0^+}u(t)=f\left( t,u(t), u(h(t)),^\mathcal{H} \mathbb{D}^{\alpha, \, \beta; \, \psi}_{0^+}u(t)\right), \\
\mathbb{I}_{0^+}^{1-\rho; \, \psi}u(0)=u_0.  
\end{cases} 
\end{equation}
\end{small}
Let $^\mathcal{H} \mathbb{D}^{\alpha, \, \beta; \, \psi}_{0^+}u(t)=g_u(t).$  Then we have
$g_u(t)=f\left( t,u(t), u(h(t)),g_u(t)\right) $ and \eqref{ie2} becomes
\begin{small}
\begin{equation} \label{ie3}
\begin{cases}
^\mathcal{H} \mathbb{D}^{\alpha, \, \beta; \, \psi}_{0^+}u(t)=g_u(t),\\
\mathbb{I}_{0^+}^{1-\rho; \, \psi}u(0)=u_0.  
\end{cases} 
\end{equation}
\end{small}
Then the problem \eqref{ie3} is equivalent to the following fractional integral \cite{Sousa2}
\begin{small}
\begin{equation}\label{ie4}
u(t)= \mathcal{R}_\Psi^\rho (t,0) \,  \, u_0 +\mathbb{I}_{0^+}^{\alpha; \, \psi}g_u(t) ,\, \text{ $t \in [0,t_1] $.}
 \end{equation}
\end{small} 
Now, if $t\in (t_1,t_2]$ then in the view of \eqref{ie2} we have
\begin{small}
$$
^\mathcal{H} \mathbb{D}^{\alpha, \, \beta; \, \psi}_{0^+}u(t)=g_u(t), \, t\in (t_1,t_2] 
~~\mbox{with}~~\mathbb{I}_{0^+}^{1-\rho; \, \psi}u(t_1^+)- \mathbb{I}_{0^+}^{1-\rho; \, \psi}u(t_1^-)=\mathcal{J}_1(u(t_1^-)).
$$
\end{small}
By Lemma \ref{JK1}, we have
\begin{small}
\begin{align}\label{ie5}
u(t)\nonumber &= \mathcal{R}_\Psi^\rho (t,0) \, \left. \left\{\mathbb{I}_{0^+}^{{1-\rho}; \, {\psi}}u(t_1^+)-\mathbb{I}_{0^+}^{1-\rho+\alpha; \, \psi}g_u(t)\right|_{t=t_1} \right\}+\mathbb{I}_{0^+}^{\alpha; \, \psi}g_u(t) \nonumber \\ 
&=  \mathcal{R}_\Psi^\rho (t,0) \,  \left. \left\{\mathbb{I}_{0^+}^{{1-\rho}; \, {\psi}}u(t_1^-)+\mathcal{J}_1(u(t_1^-))-\mathbb{I}_{0^+}^{1-\rho+\alpha; \, \psi}g_u(t)\right|_{t=t_1} \right\}\nonumber \\
& \qquad +\mathbb{I}_{0^+}^{\alpha; \, \psi}g_u(t), ~ t\in (t_1,t_2].
   \end{align}
\end{small}
Now, from \eqref{ie4}, we have 
$ \mathbb{I}_{0^+}^{{1-\rho}; \, {\psi}}u(t)= u_0+\mathbb{I}_{0^+}^{{1-\rho+\alpha}; \, {\psi}}g_u(t).$
This gives
\begin{small}
\begin{equation}\label{ie6}
\left. \mathbb{I}_{0^+}^{{1-\rho}; \, {\psi}}u(t_1^-)-\mathbb{I}_{0^+}^{1-\rho+\alpha; \, \psi}g_u(t)\right|_{t=t_1}=u_0.
\end{equation}
\end{small}
Using \eqref{ie6} in \eqref{ie5}, we obtain
\begin{small}
 \begin{equation}\label{ie7}
u(t)=\mathcal{R}_\Psi^\rho (t,0) \,  \left(u_0+{\mathcal{J}}_1(u(t_1^-))\right) +\mathbb{I}_{0^+}^{\alpha; \, \psi}g_u(t), ~ t\in (t_1,t_2].
\end{equation} 
\end{small}
Continuing in this manner, we obtain
\begin{small}
\begin{equation}\label{ie11}
 u(t)=\mathcal{R}_\Psi^\rho (t,0) \, \, \left(u_0 +\sum_{i=1}^{k} \mathcal{J}_{i}(u(t_i^-))\right)+\mathbb{I}_{0^+}^{\alpha; \, \psi}g_u(t),~t\in (t_k, t_{k+1}],~k=1,\cdots,p.
\end{equation}
\end{small}
From above we obtain \eqref{ie1}.

Conversely, let $u \in \mathbb{X}_{\mathcal{C},\,\rho,\,\Psi}$ satisfies the fractional integral equation \eqref{ie1}. Then, for $t\in J$, we have
\begin{small}
$$
u(t)=\mathcal{R}_\Psi^\rho (t,0) \, \, \left(u_0 +\sum_{0<t_k <t}\mathcal{J}_k(u(t_k^-))\right)+\mathbb{I}_{0^+}^{\alpha; \, \psi}g_u(t), ~t\in J. $$
\end{small}
Applying the $\psi$-Hilfer fractional derivative operator $^\mathcal{H} D^{\alpha, \, \beta; \, \psi}_{0^+}$ on both sides of the above equaiton and using the result (\cite{Sousa2}, Page 10),  
$
^\mathcal{H} \mathbb{D}^{\alpha, \, \beta; \, \psi}_{0^+}(\psi(t)-\psi(0))^{\rho-1}=0, ~ 0< \rho <1,
$
Theorem \ref{ab}, we obtain
\begin{small}
\begin{equation}\label{ie8}
^\mathcal{H} \mathbb{D}^{\alpha,\, \beta; \, \Psi}_{0^+}u(t)=g_u(t)=f\left( t,u(t), u(h(t)),^\mathcal{H} \mathbb{D}^{\alpha,\, \beta; \, \Psi}_{0^+}u(t)\right) ,~t \in J.
\end{equation}
\end{small}
which is \eqref{p1}. Further, from \eqref{ie4}, we have
\begin{small}
\begin{align*}
\mathbb{I}_{0^+}^{{1-\rho}; \, {\psi}}u(t)&=u_0 \,\,\mathbb{I}_{0^+}^{{1-\rho}; \, {\psi}}\mathcal{R}_\Psi^\rho (t,0) \, + \mathbb{I}_{0^+}^{{1-\rho}; \, {\psi}}\, \mathbb{I}_{0^+}^{\alpha; \, \psi}g_u(t)= u_0 + \mathbb{I}_{0^+}^{{1-\rho+\alpha}; \, {\psi}} g_u(t),
\end{align*}
\end{small}
which gives
\begin{equation*}\label{ie10}
\mathbb{I}_{0^+}^{{1-\rho}; \, {\psi}}u(0)=u_0, 
\end{equation*}
Now from equation \eqref{ie11},  for $ t\in(t_k,t_{k+1}]$,  we have
\begin{small}
\begin{align}\label{ie12} 
\mathbb{I}_{0^+}^{{1-\rho}; \, {\psi}}u(t)&=\left\{u_0+\sum_{i=1}^{k}{\mathcal{J}}_i (u(t_i^-))\right \}\mathbb{I}_{0^+}^{{1-\rho}; \, {\psi}}\mathcal{R}_\Psi^\rho (t,0) \, + \mathbb{I}_{0^+}^{{1-\rho}; \, {\psi}} \mathbb{I}_{0^+}^{\alpha; \, \psi}g_u(t) \nonumber\\
&= u_0+\sum_{i=1}^{k}{\mathcal{J}}_i (u(t_i^-)) + \mathbb{I}_{0^+}^{{1-\rho+\alpha}; \, {\psi}} g_u(t). 
\end{align}
\end{small}  
Again for  $ t\in(t_{k-1},t_k]$,  we have
\begin{small}
\begin{align} \label{ie13}
\mathbb{I}_{0^+}^{{1-\rho}; \, {\psi}}u(t)&= \left\{u_0+\sum_{i=1}^{k-1}{\mathcal{J}}_i (u(t_i^-))\right \}\mathbb{I}_{0^+}^{{1-\rho}; \, {\psi}}\mathcal{R}_\Psi^\rho (t,0) \, + \mathbb{I}_{0^+}^{{1-\rho}; \, {\psi}} \mathbb{I}_{0^+}^{\alpha; \, \psi}g_u(t) \nonumber\\
&= u_0+\sum_{i=1}^{k-1}{\mathcal{J}}_i (u(t_i^-)) + \mathbb{I}_{0^+}^{{1-\rho+\alpha}; \, {\psi}} g_u(t),
\end{align}
\end{small}
Therefore, from \eqref{ie12} to \eqref{ie13}, we obtain
\begin{small}
\begin{equation}\label{ie14}
\mathbb{I}_{0^+}^{1-\rho; \, \psi}u(t_k^+)- \mathbb{I}_{0^+}^{1-\rho; \, \psi}u(t_k^-)= \sum_{i=1}^{k}{\mathcal{J}}_i (u(t_i^-))-\sum_{i=1}^{k-1}{\mathcal{J}}_i (u(t_i^-)) = {\mathcal{J}}_k (u(t_k^-)).
\end{equation}
\end{small}
 This completes the proof.
\end{proof}
\section{Existence, Uniqueness and UHML Stabilty}

This section deals with the
In this section, we derive the existence and uniqueness of solution to the problem  \eqref{p1}--\eqref{p4}. Further, the UHML stability of the equation \eqref{p1}--\eqref{p2} is investigated.
\begin{theorem}\label{exi}
Assume that:
\begin{itemize}[topsep=0pt,itemsep=-1ex,partopsep=1ex,parsep=1ex]
\item[($H_1$)] The function $f:J\times \mathbb{R}^{3} \to \mathbb{R}$ is continuous and there exists constant $K>0$ and $0< \mathcal{L}_f <1 $ satisfy the following condition:
\begin{small}
$$
\left| f(t,u_1, u_2, u_3)-f(t,v_1, v_2, v_3)\right|\leq K (\Psi(t)-\Psi(0))^{1-\rho}\,\sum_{i=1}^{2}|u_i-v_i| + \mathcal{L}_f |u_3 - v_3|,
$$
 $t\in J$ and $u_i, v_i \in \mathbb{R} ~\text{for}\,~i=1,2,3.$
 \end{small}
\item[($H_2$)] The functions $\mathcal{J}_k:\mathbb{R}\to \mathbb{R}$, $(k=1,\cdots,p)$ satisfy the condition
\begin{small}
$$
\left| \mathcal{J}_k(u(t_k^-))-\mathcal{J}_k(v(t_k^-))\right|\leq \mathcal{L}_{{\mathcal{J}}_{k}} \,(\Psi(t_k^-)-\Psi(0))^{1-\rho}\,|u(t_k^-)-v(t_k^-)|,
$$
~where  $u\in {\mathcal{PC}}_{1-\rho;\, \Psi}\left( J,\,\mathbb{R}\right)\,and \, \mathcal{L}_{{\mathcal{J}}_{k}} >0.$
\item[($H_3$)]  
$
\mathcal{L}:\frac{\sum_{k=1}^{p}\mathcal{L}_{{\mathcal{J}}_{k}}}{\Gamma(\rho)}+\frac{2K(\Psi(b)-\Psi(0))^{\alpha-\rho+1}}{(1-\mathcal{L}_f)\Gamma(\alpha+1)}<1.
$
\end{small}
\end{itemize}
Then,
\begin{itemize}[topsep=0pt,itemsep=-1ex,partopsep=1ex,parsep=1ex]
\item[1.] the problem \eqref{p1}--\eqref{p4} has unique solution in the space $\mathbb{X}_{\mathcal{C},\,\rho,\,\Psi}$ ;
\item[2.] the equation \eqref{p1}--\eqref{p2} is UHML stable.
\end{itemize}
 \end{theorem}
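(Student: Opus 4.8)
The plan is to prove both conclusions from the integral reformulation of Theorem \ref{fie}. For part 1 I would set up the operator $\mathcal{T}:\mathbb{X}_{\mathcal{C},\,\rho,\,\Psi}\to\mathbb{X}_{\mathcal{C},\,\rho,\,\Psi}$ by
$$
(\mathcal{T}u)(t)=
\begin{cases}
\phi(t), & t\in[-r,0],\\
\mathcal{R}_\Psi^\rho(t,0)\Big(u_0+\sum_{0<t_k<t}\mathcal{J}_k(u(t_k^-))\Big)+\mathbb{I}_{0^+}^{\alpha;\,\Psi}g_u(t), & t\in J,
\end{cases}
$$
so that, by Theorem \ref{fie}, fixed points of $\mathcal{T}$ are exactly the solutions of \eqref{p1}--\eqref{p4}. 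First I must check that the implicit term is well defined: for each fixed $u$ the relation $g_u(t)=f(t,u(t),u(h(t)),g_u(t))$ determines $g_u$ uniquely, since ($H_1$) makes $z\mapsto f(t,u(t),u(h(t)),z)$ a contraction with constant $\mathcal{L}_f<1$. The same computation yields the estimate I will reuse throughout,
$$
|g_u(t)-g_v(t)|\le\frac{K(\Psi(t)-\Psi(0))^{1-\rho}}{1-\mathcal{L}_f}\big(|u(t)-v(t)|+|u(h(t))-v(h(t))|\big).
$$

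Next I would show $\mathcal{T}$ is a contraction in $\|\cdot\|_{\mathbb{X}_{\mathcal{C},\,\rho,\,\Psi}}$. Since $\mathcal{T}u$ and $\mathcal{T}v$ both equal $\phi$ on $[-r,0]$, the $\mathcal{C}$-part of $\|\mathcal{T}u-\mathcal{T}v\|_{\mathbb{X}_{\mathcal{C},\,\rho,\,\Psi}}$ vanishes and only the weighted $\mathcal{PC}$-part on $J$ is relevant. Multiplying the difference by $(\Psi(t)-\Psi(0))^{1-\rho}$ and using $(\Psi(t)-\Psi(0))^{1-\rho}\mathcal{R}_\Psi^\rho(t,0)=1/\Gamma(\rho)$, hypothesis ($H_2$) bounds the impulse sum by $\frac{1}{\Gamma(\rho)}\sum_{k=1}^p\mathcal{L}_{\mathcal{J}_k}\|u-v\|_{\mathbb{X}_{\mathcal{C},\,\rho,\,\Psi}}$. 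For the fractional-integral term I insert the $g_u-g_v$ estimate, control $(\Psi(s)-\Psi(0))^{1-\rho}|u(s)-v(s)|$ and the delay contribution by $\|u-v\|_{\mathbb{X}_{\mathcal{C},\,\rho,\,\Psi}}$, and evaluate the $\Psi$-Riemann--Liouville integral of a constant via $\mathbb{I}_{0^+}^{\alpha;\,\Psi}1=\frac{(\Psi(t)-\Psi(0))^{\alpha}}{\Gamma(\alpha+1)}$; taking the supremum over $t\le b$ produces precisely $\frac{2K(\Psi(b)-\Psi(0))^{\alpha-\rho+1}}{(1-\mathcal{L}_f)\Gamma(\alpha+1)}$. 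These two contributions sum to the constant $\mathcal{L}<1$ of ($H_3$), so $\mathcal{T}$ is a contraction, the Banach fixed point theorem gives the unique solution, and $\mathcal{T}$ is in particular a Picard operator.

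For part 2 I would fix the unique solution $u$ from part 1 and an arbitrary $v$ satisfying \eqref{HU1}. By Remark \ref{rm}, $v$ solves the perturbed problem with data $\mathcal{E},\mathcal{E}_k$, so applying the representation of Theorem \ref{fie} to the perturbed data and subtracting that of $u$ gives
$$
v(t)-u(t)=\mathcal{R}_\Psi^\rho(t,0)\Big(\sum_{0<t_k<t}\big(\mathcal{J}_k(v(t_k^-))-\mathcal{J}_k(u(t_k^-))\big)+\sum_{0<t_k<t}\mathcal{E}_k\Big)+\mathbb{I}_{0^+}^{\alpha;\,\Psi}\big(G_v-g_u\big)(t),
$$
where $G_v={}^{\mathcal H}\mathbb{D}^{\alpha,\beta;\,\Psi}_{0^+}v$ satisfies $G_v=f(t,v,v(h),G_v)+\mathcal{E}$. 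Writing $w(t)=(\Psi(t)-\Psi(0))^{1-\rho}|v(t)-u(t)|$, the implicit estimate (now carrying the extra term $|\mathcal{E}(t)|/(1-\mathcal{L}_f)$) together with ($H_2$), $|\mathcal{E}_k|\le\epsilon$ and $|\mathcal{E}(t)|\le\epsilon\,E_\alpha(\zeta_{f,\Psi}(\Psi(t)-\Psi(0))^\alpha)$ yields an inequality of the exact form
$$
w(t)\le\mathcal{V}(t)+\mathbf{g}(t)\int_0^t\mathcal{A}_\Psi^\alpha(t,s)\,w(s)\,ds+\sum_{0<t_k<t}\beta_k\,w(t_k^-),
$$
with $\mathcal{V}$ a nonnegative multiple of $\epsilon\,E_\alpha(\zeta_{f,\Psi}(\Psi(t)-\Psi(0))^\alpha)$, $\mathbf{g}$ a continuous function and $\beta_k=\mathcal{L}_{\mathcal{J}_k}/\Gamma(\rho)$. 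Applying the generalized Gronwall inequality of Lemma \ref{JI} then bounds $w(t)$ by $C_{p,E_\alpha}\,\epsilon\,E_\alpha(\zeta_{f,\Psi}(\Psi(t)-\Psi(0))^\alpha)$, which is the UHML estimate; the identity $u=v$ on $[-r,0]$ is immediate since both equal $\phi$ there.

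The routine parts are the two implicit-function estimates (solving for $g_u$ and $G_v$) and the power-integral evaluation. I expect the genuine difficulty to be twofold. First, controlling the delay contribution $(\Psi(s)-\Psi(0))^{1-\rho}|u(h(s))-v(h(s))|$ by the $\mathbb{X}_{\mathcal{C},\,\rho,\,\Psi}$-norm, where one must treat separately the range $h(s)\in[-r,0]$ (on which the difference vanishes because both functions equal $\phi$) and $h(s)\in J$, exploiting $h(t)\le t$ and the monotonicity of $\Psi$; this weighted delay bound is what makes the factor $2$ appear in ($H_3$) and is the technical crux of the contraction estimate. Second, organizing the stability estimate so that it matches \emph{exactly} the hypotheses of Lemma \ref{JI} (nonnegative $\mathcal{V}\in\mathcal{PC}_{1-\rho;\,\Psi}$, continuous $\mathbf{g}$, positive $\beta_k$), after folding the delayed term back inside the same kernel, so that the conclusion delivers the Mittag--Leffler bound with the constant $C_{p,E_\alpha}$ absorbing the finite product over the impulse points.
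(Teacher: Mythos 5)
Your part 1 follows the paper's argument essentially verbatim: the same operator, the same implicit estimate for $|g_u-g_{\tilde u}|$, the same contraction constant $\mathcal{L}$ from ($H_3$), and Banach's theorem. (Your preliminary observation that $g_u$ must first be shown to exist by solving the implicit relation via a contraction in the third argument, with constant $\mathcal{L}_f<1$, is a point the paper silently assumes; including it is an improvement.) The genuine gap is in part 2. After deriving the integral inequality for $w(t)=(\Psi(t)-\Psi(0))^{1-\rho}|v(t)-u(t)|$, you assert it is ``of the exact form'' required by Lemma \ref{JI}, with only $w(s)$ under the integral sign, and you relegate the delayed contribution to a closing remark about ``folding the delayed term back inside the same kernel.'' That folding is precisely the step that fails: Lemma \ref{JI} contains no delayed term, and for the unknown function $w$ there is no reason that the term at $h(s)$ is dominated by the term at $s$, because $w$ is not known to be monotone. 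So Lemma \ref{JI} cannot be applied directly to $w$, and your proposal supplies no mechanism for removing the delay.

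The paper's resolution --- the idea your proposal is missing --- is a detour through Picard operator theory combined with the abstract Gronwall lemma. One introduces the majorizing operator $\mathcal{Q}$ on $\mathcal{B}=C([-r,b],\mathbb{R}_+)$ defined in \eqref{e23}, shows via ($H_3$) that $\mathcal{Q}$ is a contraction and hence a Picard operator with unique fixed point $z^*$, and then proves separately that $z^*$ is \emph{increasing}. Monotonicity of $z^*$, together with $h(t)\le t$, gives $z^*(h(t))\le z^*(t)$, so the delay can legitimately be absorbed in the fixed-point \emph{equation} satisfied by $z^*$ (this is where the factor $2$ enters the stability estimate); Lemma \ref{JI} is then applied to that delay-free inequality to obtain $z^*(t)\le \epsilon\, C_{p,E_\alpha} E_\alpha\left(\zeta_{f,\Psi}(\Psi(t)-\Psi(0))^\alpha\right)$. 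Finally, the abstract Gronwall Lemma \ref{AGL} converts the inequality $w\le \mathcal{Q}(w)$ into $w\le z^*$, transferring the Mittag--Leffler bound to $w$. Without this pair of steps (a monotone fixed point of a majorizing Picard operator, plus Lemma \ref{AGL}) your stability argument does not close; you should restructure part 2 around $\mathcal{Q}$ rather than around a direct application of Lemma \ref{JI} to $w$.
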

\begin{proof}
(1) By Theorem \ref{fie}, the equivalent fractional integral equation of the problem  \eqref{p1}--\eqref{p4} is given by the equation \eqref{ie1}.
Define the operator
\begin{small}
 $\mathbb{T}: ( \mathbb{X}_{\mathcal{C},\,\rho,\,\Psi},\, \|\cdot\|_{\mathbb{X}_{\mathcal{C},\,\rho,\,\Psi}}) \to ( \mathbb{X}_{\mathcal{C},\,\rho,\,\Psi},\, \|\cdot\|_{\mathbb{X}_{\mathcal{C},\,\rho,\,\Psi}})$ 
  by
\begin{equation}\label{e16}
\mathbb{T}(u(t))=
\begin{cases}
\phi(t), ~~~ t\in [-r,0],\\
 \mathcal{R}_\Psi^\rho (t,0) \, \, \left(u_0 +\sum_{a<t_k<t} \mathcal{J}_k(u(t_k^-))\right)+\mathbb{I}_{0^+}^{\alpha; \, \Psi}g_u(t), ~\text{ $t \in J$},  
\end{cases}
\end{equation}
\end{small}
where 
\begin{small}
\begin{equation}\label{e616}
g_u(t)=f\left( t,u(t), u(h(t)),g_u(t)\right).
\end{equation}
\end{small}
Then the solution of \eqref{p1}--\eqref{p4} will be the fixed point of $\mathbb{T}$.
In order to prove $\mathbb{T}$ is Picard operator, we prove that $\mathbb{T}$ is contraction mapping. Let any $u, \tilde{u}\in \mathbb{X}_{\mathcal{C},\,\rho,\,\Psi}$. Then for any $t \in [-r,0]$, 
\begin{small}
\begin{equation}\label{e20}
\left| \mathbb{T}(u(t))-\mathbb{T}(\tilde{u}(t))\right| =0 \implies \|\mathbb{T}(u(t))-\mathbb{T}(\tilde{u}(t))\|_{\mathcal{C}}=0.
\end{equation}
\end{small}
Further, for any $t\in J$, by definition of $\mathbb{T}$ we have 
\begin{small}
\begin{align} \label{e717}
|\mathbb{T}(u(t))-\mathbb{T}(\tilde{u}(t))| 
&\leq  \mathcal{R}_\Psi^\rho (t,0) \, \sum_{a<t_k<t}\left| \mathcal{J}_k(u(t_k^-))-\mathcal{J}_k(\tilde{u}(t_k^-))\right| \nonumber \\
&\qquad + \frac{1}{\Gamma(\alpha)} \int_{0}^{t}\mathcal{A}_\Psi^\alpha(t,s)\,|g_u (s)-g_{\tilde{u}}(s)| \, ds.
\end{align}
\end{small}
Using ($H_1$) and \eqref{e616}, for any $t\in J $ we have,
\begin{small}
$$
|g_u (t)-g_{\tilde{u}}(t)|
 \leq K  (\Psi(t)-\Psi(0))^{1-\rho}\left\lbrace |u(t)-\tilde{u}(t)|+|u(h(t))-\tilde{u}(h(t))| \right\rbrace + \mathcal{L}_f |g_u (t)-g_{\tilde{u}}(t)|.
$$
\end{small}
This implies that
\begin{small}
\begin{equation}\label{e818}
|g_u (t)-g_{\tilde{u}}(t)| \leq \frac{K (\Psi(t)-\Psi(0))^{1-\rho} }{1-\mathcal{L}_f} \left\lbrace |u(t)-\tilde{u}(t)|+|u(h(t))-\tilde{u}(h(t))| \right\rbrace. 
\end{equation}
\end{small}
Making use of hypothesis $(H_2)$ and the inequality \eqref{e818},\,\eqref{e717} takes the form,
\begin{small}
\begin{align*}
(\Psi(t)-\Psi(0))^{1-\rho}|\mathbb{T}(u(t))-\mathbb{T}(\tilde{u}(t))|
& \leq \frac{1}{\Gamma(\rho)} \sum_{0<t_k<t} \mathcal{L}_{{\mathcal{J}}_{k}} (\Psi(t_k^-)-\Psi(0))^{1-\rho}\,|u(t_k^-)-v(t_k^-)|\\
&+ \frac{K\,(\Psi(t)-\Psi(0))^{1-\rho}}{(1-\mathcal{L}_f)\Gamma(\alpha)} \int_{a}^{t}\mathcal{A}_\Psi^\alpha(t,s)\,(\Psi(s)-\Psi(0))^{1-\rho}\\
&\qquad \qquad \times \left\lbrace |u(s)-\tilde{u}(s)|+|u(h(s))-\tilde{u}(h(s))| \right\rbrace \, ds\\
&\leq \frac{1}{\Gamma(\rho)} \sum_{k=1}^{p} \mathcal{L}_{{\mathcal{J}}_{k}} \|u-\tilde{u}\|_{{\mathcal{PC}}_{1-\rho;\, \psi}(J,\mathbb{R})}\\ 
&+\frac{2\,K \,(\Psi(b)-\Psi(0))^{1-\rho}}{(1-\mathcal{L}_f)\,\Gamma(\alpha)} \,\|u-\tilde{u}\|_{{\mathcal{PC}}_{1-\rho;\, \psi}(J,\mathbb{R})}\,\int_{0}^{t}\mathcal{A}_\Psi^\alpha(t,s)\,ds  \\
&\leq \left( \frac{\sum_{k=1}^{p} \mathcal{L}_{{\mathcal{J}}_{k}}}{\Gamma(\rho)} + \frac{2\,K \,(\Psi(b)-\Psi(0))^{1-\rho+\alpha}}{(1-\mathcal{L}_f)\Gamma(\alpha+1)} \right)\|u-\tilde{u}\|_{{\mathcal{PC}}_{1-\rho;\, \psi}(J,\mathbb{R})}.   
\end{align*}
\end{small}
Therefore,
\begin{small}
\begin{align}\label{e111}
\|\mathbb{T}(u(t))-\mathbb{T}(\tilde{u}(t))\|_{{\mathcal{PC}}_{1-\rho;\, \psi}(J,\mathbb{R})}
& = \sup_{t\in J} \left|(\Psi(t)-\Psi(0))^{1-\rho}(\mathbb{T}(u(t))-\mathbb{T}(\tilde{u}(t)))\right| \leq \mathcal{L}\, \|u-\tilde{u}\|_{{\mathcal{PC}}_{1-\rho;\, \psi}(J,\mathbb{R})}.
\end{align}
From \eqref{e20} and \eqref{e111} we have,
\begin{align*}
\|\mathbb{T}(u(t))-\mathbb{T}(\tilde{u}(t))\|_{\mathbb{X}_{\mathcal{C},\,\rho,\,\Psi}}
&= \max \left\{\|\mathbb{T}(u(t))-\mathbb{T}(\tilde{u}(t))\|_{\mathcal{C}},\,\|\mathbb{T}(u(t))-\mathbb{T}(\tilde{u}(t))\|_{{\mathcal{PC}}_{1-\rho;\, \psi}(J,\mathbb{R})} \right\}\\
&\leq \mathcal{L}\, \max \left\{0,\, \|u-\tilde{u}\|_{{\mathcal{PC}}_{1-\rho;\, \psi}(J,\mathbb{R})}\right\}\\
&\leq \mathcal{L}\, \|u-\tilde{u}\|_{\mathbb{X}_{\mathcal{C},\,\rho,\,\Psi}}.
\end{align*}
\end{small}
Since $\mathcal{L}<1$, $\mathbb{T} $ is a contraction on $\mathbb{X}_{\mathcal{C},\,\rho,\,\Psi}$.Hence by Banach contraction principle $\mathbb{T}$ has a unique fixed point in $\mathbb{X}_{\mathcal{C},\,\rho,\,\Psi}$.
which is the unique solution of \eqref{p1}--\eqref{p4}.
 
(2) In this part we prove that the problem \eqref{p1}--\eqref{p2} is UHML stable.
 let $v\in \mathbb{X}_{\mathcal{C},\,\rho,\,\Psi}$ be a solution of the inequality \eqref{HU1}.Then by the Theorem \ref{fie}, and Remark \ref{rm}, we have
 \begin{small}
 \begin{equation}\label{e222}
 v(t) =
   \mathcal{R}_\Psi^\rho (t,0) \, \, \left(\mathbb{I}_{0^+}^{1-\rho; \, \Psi}v(0) +\sum_{0<t_k<t}\left(  \mathcal{J}_k(v(t_k^-))+\mathcal{E}_k\right) \right)+\mathbb{I}_{0^+}^{\alpha; \, \Psi}g_v(t) + \mathbb{I}_{0^+}^{\alpha; \, \Psi} \mathcal{E}, ~\text{ $t \in J$},  
   \end{equation}
   \end{small}
 where
 \begin{small}
 $
 g_v(t)= f(t,v(t), v(h(t)),g_v(t)).
 $
 \end{small}
 
Let $u\in \mathbb{X}_{\mathcal{C},\,\rho,\,\Psi}$ be the unique solution of the problem
\begin{small}
\begin{align}\label{e333}
\begin{cases}
 ^\mathcal{H} \mathbb{D}^{\alpha,\, \beta; \, \Psi}_{0^+}u(t)=f\left(t, u(t), u(h(t)), ^\mathcal{H} \mathbb{D}^{\alpha,\, \beta; \, \Psi}_{0^+}u(t) \right) ,~t \in J-\{t_1, t_2,\cdots ,t_p\},\\
\Delta \mathbb{I}_{0^+}^{1-\rho; \, \Psi}u(t_k)= \mathcal{J}_k(u(t_k^-)),  ~k = 1,\cdots,p, \\
 \mathbb{I}_{0^+}^{1-\rho; \, \Psi}u(0)=\mathbb{I}_{0^+}^{1-\rho; \, \Psi}v(0), \\
 u(t)=v(t),\, t\in [-r,0]. 
\end{cases}
\end{align}
\end{small} 
Then from the equation \eqref{e222} and in the view of Remark \ref{rm}, for any $t\in J$, we have
\begin{small}
\begin{align}\label{e21}
& \left|v(t) - \mathcal{R}_\Psi^\rho (t,0) \, \, \left(\mathbb{I}_{0^+}^{1-\rho; \, \Psi}v(0) +\sum_{0<t_k<t}  \mathcal{J}_k(v(t_k^-))\right)-\mathbb{I}_{0^+}^{\alpha; \, \Psi}g_v(t)\right| \nonumber \\
& \leq \mathcal{R}_\Psi^\rho (t,0) \,  \sum_{k=1}^{p}\left|\mathcal{E}_k \right| + \frac{\epsilon}{\Gamma(\alpha)} \int_{0}^{t}\mathcal{A}_\Psi^\alpha(t,s)\,|\mathcal{E}(s)| \, ds\nonumber \\
& \leq m \epsilon \,  \mathcal{R}_\Psi^\rho (t,0) \, +\frac{\epsilon}{\Gamma(\alpha)} \int_{0}^{t}\mathcal{A}_\Psi^\alpha(t,s)\,E_{\alpha}((\Psi(s)-\Psi(0))^\alpha)\, ds \nonumber\\
&\leq m \epsilon \,  \mathcal{R}_\Psi^\rho (t,0) \,  + \frac{\epsilon}{\Gamma(\alpha)} \sum_{n=0}^{\infty} \frac{1}{\Gamma(n\alpha+1)}\int_{0}^{t}\mathcal{A}_\Psi^\alpha(t,s)\,(\Psi(s)-\Psi(0)^{n\alpha})\, ds \nonumber\\
& = m \epsilon \,  \mathcal{R}_\Psi^\rho (t,0) \,  +\frac{\epsilon}{\Gamma(\alpha)} \sum_{n=0}^{\infty} \frac{1}{\Gamma(n\alpha+1)}(\Psi(t)-\Psi(0))^{\alpha} \nonumber\\
&\qquad \times \int_{0}^{t}\Psi^{'}(s)\left(1-\frac{\Psi(s)-\Psi(0)}{\Psi(t)-\Psi(0)} \right) ^{\alpha-1}\,(\Psi(s)-\Psi(0)^{n\alpha})\, ds \nonumber\\
&= m \epsilon \,  \mathcal{R}_\Psi^\rho (t,0) \, +\frac{\epsilon}{\Gamma(\alpha)} \sum_{n=0}^{\infty} \frac{1}{\Gamma(n\alpha+1)}(\Psi(t)-\Psi(0))^{n\alpha+\alpha}\int_{0}^{1}(1-\theta)^{\alpha-1} \theta ^ {n\alpha}\, ds \nonumber\\
&\qquad\qquad  \text{letting}\, \theta  = \frac{\Psi(s)-\Psi(0)}{\Psi(t)-\Psi(0)} \,~ \text{we have,}\,~ \Psi^{'}(s)\, ds = (\Psi(t)-\Psi(0))\, d\theta \nonumber\\
& =m \epsilon \,  \mathcal{R}_\Psi^\rho (t,0) \, +\frac{\epsilon}{\Gamma(\alpha)} \sum_{n=0}^{\infty} \frac{(\Psi(t)-\Psi(0))^{n\alpha+\alpha}}{\Gamma(n\alpha+1)}\,\frac{\Gamma(\alpha)\,\Gamma(n \alpha+1)}{\Gamma(\alpha+n\alpha+1)} \nonumber\\
&= m \epsilon \,  \mathcal{R}_\Psi^\rho (t,0) \, +\frac{\epsilon}{\Gamma(\alpha)} \sum_{n=0}^{\infty} \frac{(\Psi(t)-\Psi(0))^{(n+1)\alpha}}{\Gamma((n+1)\alpha)} \nonumber\\
&\leq m \epsilon \,  \mathcal{R}_\Psi^\rho (t,0) \,  + \epsilon\, E_\alpha ((\Psi(t)-\Psi(0))^\alpha).
\end{align}
\end{small}
Now for $t\in [-r,0],\, |v(t)-u(t)|=0.$ Further, utilizing ($H_2$), \eqref{e818} and \eqref{e21}, for any $t\in J,$ we have
\begin{small}
\begin{align*}
|v(t)-u(t)|
& \leq \left|v(t) - \mathcal{R}_\Psi^\rho (t,0) \, \, \left(\mathbb{I}_{0^+}^{1-\rho; \, \Psi}v(0) +\sum_{0<t_k<t}  \mathcal{J}_k(v(t_k^-))\right)-\mathbb{I}_{0^+}^{\alpha; \, \Psi}g_v(t)\right|\\
&\qquad + \mathcal{R}_\Psi^\rho (t,0) \,  \sum_{0<t_k<t}  \left| \mathcal{J}_k(v(t_k^-))-  \mathcal{J}_k(u(t_k^-))\right| + \left|\mathbb{I}_{0^+}^{\alpha; \, \Psi}\left( g_v(t)- g_u(t) \right) \right| \\
&\leq \left( m \epsilon \,  \mathcal{R}_\Psi^\rho (t,0) \,  + \epsilon\, E_\alpha ((\Psi(t)-\Psi(0))^\alpha)\right) \\
&\qquad +  \mathcal{R}_\Psi^\rho (t,0) \,  \sum_{0<t_k<t}   \mathcal{L}_{{\mathcal{J}}_{k}} (\Psi(t_k^-)-\Psi(0))^{1-\rho}\,|v(t_k^-)-u(t_k^-)|\\
&\qquad +  \frac{K}{(1-\mathcal{L}_f)\Gamma(\alpha)} \int_{0}^{t}\mathcal{A}_\Psi^\alpha(t,s)\,(\Psi(s)-\Psi(0))^{1-\rho}\\
&\qquad \qquad \times \left\lbrace |v(s)-u(s)|+|v(h(s))-u(h(s))| \right\rbrace \, ds.
\end{align*}
This gives
\begin{align}\label{e22}
(\Psi(t)-\Psi(0))^{1-\rho}|v(t)-u(t)|
&\leq \left( \frac{m \epsilon }{\Gamma(\rho)} + \epsilon\,(\Psi(b)-\Psi(0))^{1-\rho}\, E_\alpha ((\Psi(b)-\Psi(0))^\alpha)\right) \nonumber\\
&\qquad+ \frac{1}{\Gamma(\rho)} \sum_{0<t_k<t}   \mathcal{L}_{{\mathcal{J}}_{k}} (\Psi(t_k^-)-\Psi(0))^{1-\rho}\,|v(t_k^-)-u(t_k^-)| \nonumber\\
&\qquad \qquad +  \frac{K\,(\Psi(b)-\Psi(0))^{1-\rho}}{(1-\mathcal{L}_f)\Gamma(\alpha)} \int_{0}^{t}\mathcal{A}_\Psi^\alpha(t,s)\,(\Psi(s)-\Psi(0))^{1-\rho} \nonumber\\
&\qquad \qquad \qquad\times \left\lbrace |v(s)-u(s)|+|v(h(s))-u(h(s))| \right\rbrace \, ds.
\end{align}
\end{small}
Next, we consider the Banach space $\mathcal{B}=C([-r,b],\, \mathbb{R}_+)$ of all continuous functions $z: [-r,b]\to \mathbb{R}_+$ endowed with the supremum norm $\|z\|_{\mathcal{B}}=\sup_{t\in [-r,b]}|z(t)|$. Define the operator $\mathcal{Q}:\mathcal{B}\to \mathcal{B}$ by 
\begin{small}
\begin{equation}\label{e23}
(\mathcal{Q} z)(t) =
\begin{cases}
~0,  ~ t\in [-r,0],\\
  \left( \frac{m \epsilon }{\Gamma(\rho)} + \epsilon\,(\Psi(b)-\Psi(0))^{1-\rho}\, E_\alpha ((\Psi(b)-\Psi(0))^\alpha)\right)+ \frac{1}{\Gamma(\rho)} \sum_{0<t_k<t}   \mathcal{L}_{{\mathcal{J}}_{k}} \,w(t_k^-)\\
  \qquad \qquad +  \frac{K\,(\Psi(b)-\Psi(0))^{1-\rho}}{(1-\mathcal{L}_f)\Gamma(\alpha)} \int_{0}^{t}\mathcal{A}_\Psi^\alpha(t,s)\,\left(  z(s)+z(h(s)) \right)  \, ds , ~ t\in J.  
\end{cases}
\end{equation}
\end{small}
We prove that $\mathcal{Q}$ is a Picard operator. Let $z,\tilde{z} \in \mathcal{B} .$
Then
\begin{small} 
\begin{equation}\label{e444}
|\mathcal{Q}z(t)-\mathcal{Q}\tilde{z}(t)|=0,\,t \in [-r,0].
\end{equation}
\end{small}
Now for any $t\in J$,
\begin{small}
\begin{align}\label{e555}
\left|\mathcal{Q}z(t)-\mathcal{Q}\tilde{z}(t) \right| 
& \leq \frac{1}{\Gamma(\rho)} \sum_{0<t_k<t}   \mathcal{L}_{{\mathcal{J}}_{k}} |z(t_k)-\tilde{z}(t_k)|\nonumber\\
&\qquad+ \frac{K\,(\Psi(b)-\Psi(0))^{1-\rho}}{(1-\mathcal{L}_f)\Gamma(\alpha)} \left( \int_{0}^{t}\mathcal{A}_\Psi^\alpha(t,s)|z(s)-\tilde{z}(s)|\, ds \right.\nonumber\\ 
&\qquad \qquad \left. +\int_{0}^{t}\mathcal{A}_\Psi^\alpha(t,s)|z(h(s))-\tilde{z}(h(s))|\, ds \right)\nonumber\\
& \leq \left( \frac{\sum_{k=1}^{p}\mathcal{L}_{\mathcal{J}_k}}{\Gamma(\rho)} + \frac{2\, K\,(\Psi(b)-\Psi(0))^{1-\rho+\alpha}}{(1-\mathcal{L}_f) \,\Gamma(\alpha+1)} \right) \, \|z-\tilde{z}\|_{\mathcal{B}}.
\end{align}
\end{small}
From \eqref{e444} and \eqref{e555} it follows that
\begin{small} 
$$
\|\mathcal{Q}z-\mathcal{Q}\tilde{z}\|_{\mathcal{B}}\leq\left( \frac{\sum_{k=1}^{p}\mathcal{L}_{\mathcal{J}_k}}{\Gamma(\rho)} + \frac{2\, K\,(\Psi(b)-\Psi(0))^{1-\rho+\alpha}}{(1-\mathcal{L}_f) \,\Gamma(\alpha+1)} \right) \|z-\tilde{z}\|_{\mathcal{B}}.
$$
By ($H_3$), $\left( \frac{\sum_{k=1}^{p}\mathcal{L}_{\mathcal{J}_k}}{\Gamma(\rho)} + \frac{2\, K\,(\Psi(b)-\Psi(0))^{1-\rho+\alpha}}{(1-\mathcal{L}_f) \,\Gamma(\alpha+1)} \right)<1$.
\end{small}
Therefore, $\mathcal{Q}$ is contraction.
By Banach contraction principle $F_{\mathcal{Q}}=\{z^*\}.$ It follows that 
\begin{small}
\begin{align}\label{e24}
z^*(t)
& =  \left( \frac{m \epsilon }{\Gamma(\rho)} + \epsilon\,(\Psi(b)-\Psi(0))^{1-\rho}\, E_\alpha ((\Psi(b)-\Psi(0))^\alpha)\right)  + \frac{1}{\Gamma(\rho)} \sum_{0<t_k<t}   \mathcal{L}_{{\mathcal{J}}_{k}} z^* (t_k^-) \nonumber\\
 &\qquad +  \frac{K\,(\Psi(b)-\Psi(0))^{1-\rho}}{(1-\mathcal{L}_f)\Gamma(\alpha)} \int_{0}^{t}\mathcal{A}_\Psi^\alpha(t,s)\,\left(  z^* (s)+z^* (h(s)) \right)  \, ds.
\end{align}
\end{small}
Next we show that $z^*$ is increasing. Let any $t_1, t_2 \in [-r,b]$ with $t_1 < t_2.$ If $t_1, t_2 \in [-r,0]$ then $z^*(t_2)-z^*(t_1)=0.$ Let $0< t_1 < t_2 \leq b$
Define $\mathcal{M}= \min_{s\in [0,b]} \left(z^* (s)+z^* (h(s))\right)$. Then
\begin{small}
\begin{align*}
 z^*(t_2)-z^*(t_1)
&= \frac{1}{\Gamma(\rho)} \sum_{0<t_k<t_2}   \mathcal{L}_{{\mathcal{J}}_{k}} w^* (t_k^-) - \frac{1}{\Gamma(\rho)} \sum_{0<t_k<t_1}   \mathcal{L}_{{\mathcal{J}}_{k}} z^* (t_k^-)\\
& \qquad + \frac{K\,(\Psi(b)-\Psi(0))^{1-\rho}}{(1-\mathcal{L}_f)\Gamma(\alpha)}\left(  \int_{0}^{t_2}\Psi^{'}(s)(\Psi(t_2)-\Psi(s))^{\alpha-1}\,\left(  z^* (s)+z^* (h(s)) \right)  \, ds\right.\\
&\qquad\qquad \left.- \int_{0}^{t_1}\Psi^{'}(s)(\Psi(t_1)-\Psi(s))^{\alpha-1}\,\left(  z^* (s)+z^* (h(s)) \right)  \, ds\right)\\
&\geq \frac{1}{\Gamma(\rho)} \sum_{0<t_k<t_2 - t_1}   \mathcal{L}_{{\mathcal{J}}_{k}} z^* (t_k^-) + \frac{K\,\mathcal{M}(\Psi(b)-\Psi(0))^{1-\rho}}{(1-\mathcal{L}_f)\Gamma(\alpha)}\left( \int_{0}^{t_2}\Psi^{'}(s)(\Psi(t_2)-\Psi(s))^{\alpha-1}\,ds \right.\\
&\qquad \qquad \left. - \int_{0}^{t_1}\Psi^{'}(s)(\Psi(t_1)-\Psi(s))^{\alpha-1}\,ds\right)\\
& = \frac{1}{\Gamma(\rho)} \sum_{0<t_k<t_2 - t_1}   \mathcal{L}_{{\mathcal{J}}_{k}} z^* (t_k^-)+\frac{K\,\mathcal{M}(\Psi(b)-\Psi(0))^{1-\rho}}{(1-\mathcal{L}_f)\Gamma(\alpha)} \left((\Psi(t_2)-\Psi(0))^{\alpha}- (\Psi(t_1)-\Psi(0))^{\alpha}  \right)\\
&>0. 
\end{align*}
\end{small}
This proves that $z^*$ is increasing operator. Since $h(t)\leq t,\, z^*(h(t)) \leq z^* (t), ~t\in [0,b].$ Therefore \eqref{e24} reduces to
\begin{small}
\begin{align*}
z^* (t)
& \leq \epsilon\, \left( \frac{m }{\Gamma(\rho)} + \,(\Psi(b)-\Psi(0))^{1-\rho}\, E_\alpha ((\Psi(b)-\Psi(0))^\alpha)\right)  + \frac{1}{\Gamma(\rho)} \sum_{0<t_k<t}   \mathcal{L}_{{\mathcal{J}}_{k}} z^* (t_k^-) \nonumber\\
 &\qquad +  \frac{2\,K\,(\Psi(b)-\Psi(0))^{1-\rho}}{(1-\mathcal{L}_f)\Gamma(\alpha)} \int_{0}^{t}\mathcal{A}_\Psi^\alpha(t,s)\,z^* (s)  \, ds,\, t\in [0,b].
\end{align*}
\end{small}
By applying Lemma \ref{JI} to the above inequality with
\begin{small} 
\begin{align*}
& \mathcal{U}(t)=z^* (t),~ \mathcal{V}(t)=\epsilon\, \left( \frac{m }{\Gamma(\rho)} + \,(\Psi(b)-\Psi(0))^{1-\rho}\, E_\alpha ((\Psi(b)-\Psi(0))^\alpha)\right),\\
& \mathbf{g}(t)= \frac{2\,K\,(\Psi(b)-\Psi(0))^{1-\rho}}{(1-\mathcal{L}_f)\Gamma(\alpha)},\,~\beta_k = \frac{ \mathcal{L}_{{\mathcal{J}}_{k}}}{\Gamma(\rho)},
\end{align*}
\end{small}
we obtain
\begin{small}
\begin{align*}
z^* (t) 
&\leq \epsilon\, \left( \frac{m }{\Gamma(\rho)} + \,(\Psi(b)-\Psi(0))^{1-\rho}\, E_\alpha ((\Psi(b)-\Psi(0))^\alpha)\right)\\
&\qquad \times\left[\prod_{i=1}^{k}\left\lbrace 1+\frac{ L_{\mathcal{J}_{i}}}{\Gamma(\rho)} E_\alpha \left(\frac{2\,K\,(\Psi(b)-\Psi(0))^{1-\rho}}{(1-\mathcal{L}_f)\Gamma(\alpha)}\Gamma(\alpha)(\Psi(t_i)-\Psi(0))^\alpha \right)  \right\rbrace  \right]\\
&\qquad\qquad\times E_\alpha\left(\frac{2\,K\,(\Psi(b)-\Psi(0))^{1-\rho}}{(1-\mathcal{L}_f)\Gamma(\alpha)}\Gamma(\alpha)(\Psi(t)-\Psi(0))^\alpha \right)\\
& \leq \epsilon\, \left( \frac{m }{\Gamma(\rho)} + \,(\Psi(b)-\Psi(0))^{1-\rho}\, E_\alpha ((\Psi(b)-\Psi(0))^\alpha)\right) \left\lbrace 1+\frac{ L_{\mathcal{J}}^{max}}{\Gamma(\rho)} E_\alpha \left(\frac{2\,K\,(\Psi(b)-\Psi(0))^{1-\rho+\alpha}}{(1-\mathcal{L}_f)} \right)  \right\rbrace ^ p\\
&\qquad\qquad\times E_\alpha\left(\frac{2\,K\,(\Psi(b)-\Psi(0))^{1-\rho}}{(1-\mathcal{L}_f)}(\Psi(t)-\Psi(0))^\alpha \right),\,~ t\in J.
\end{align*}
\end{small}
Therefore,
\begin{small}
\begin{equation}\label{e244}
z^* (t) \leq \epsilon\, C_{p,E_\alpha}\, E_\alpha\left(\zeta_{f,\Psi}\,(\Psi(t)-\Psi(0))^\alpha \right),~t\in J,
\end{equation}
\end{small}
where 
\begin{small}
\begin{align*}
& L_{\mathcal{J}}^{max}=\max\{L_{\mathcal{J}}^{1}, L_{\mathcal{J}}^{2}, \cdots, L_{\mathcal{J}}^{p}\},~\zeta_{f,\Psi} =  \frac{2\,(\Psi(b)-\Psi(0))^{1-\rho}}{(1-\mathcal{L}_f)}\\
 & C_{p,E_\alpha} = \left( \frac{m }{\Gamma(\rho)} + \,(\Psi(b)-\Psi(0))^{1-\rho}\, E_\alpha ((\Psi(b)-\Psi(0))^\alpha)\right)\\
 &\qquad \qquad\times \left\lbrace 1+\frac{ L_{\mathcal{J}}^{max}}{\Gamma(\rho)} E_\alpha \left(\frac{2\,K\,(\Psi(b)-\Psi(0))^{1-\rho+\alpha}}{(1-\mathcal{L}_f)} \right)  \right\rbrace ^ p.
\end{align*}
\end{small}
Note that for $z(t)=(\Psi(t)-\Psi(0))^{1-\rho}\, |v(t)-u(t)|$ from \eqref{e22} we have $z\leq \mathcal{Q} (z)$, where $\mathcal{Q}$ is an increasing Picard operator. Therefore by Lemma \ref{AGL} we obtain $z\leq z^*$. This fact in combination with \eqref{e244} gives
\begin{small}
\begin{equation}\label{JK}
(\Psi(t)-\Psi(0))^{1-\rho}\, |v(t)-u(t)|\leq \epsilon\, C_{p,E_\alpha}\, E_\alpha\left(\zeta_{f,\Psi}\,(\Psi(t)-\Psi(0))^\alpha \right), \,t\in J.
\end{equation}
\end{small}
Thus, we have proved that the problem \eqref{p1}--\eqref{p2} is UHML stable.
\begin{rem}
Since $E_\alpha (\cdot)$ is increasing, the inequality \eqref{JK} can be written as
\begin{small}
$$
(\Psi(t)-\Psi(0))^{1-\rho}\, |v(t)-u(t)|\leq \epsilon\, C_{p,\,E_\alpha}\, E_\alpha\left(\zeta_{f,\Psi}\,(\Psi(b)-\Psi(0))^\alpha \right),\, \text{for all}\, \, t\in J.
$$
\end{small}
Further, $|u(t)-v(t)|=0,~ t\in [-r,0]$. Therefore, $\|u-v\|_{\mathcal{C}}$.
Thus
\begin{small}
\begin{equation}\label{e25}
\|v-u\|_{{\mathcal{PC}}_{1-\rho;\, \psi}(J,\mathbb{R})} \leq \epsilon \, C_f,
\end{equation}
\end{small}
where
\begin{small}
 $C_f= C_{p,E_\alpha}\, E_\alpha\left(\zeta_{f,\Psi}\,(\Psi(b)-\Psi(0))^\alpha \right).$
 \end{small}
Further, for
\begin{small}
 $t\in [-r,0], |v(t)-u(t)|=0,$ 
 \end{small}
 this implies that 
\begin{small}
\begin{equation}\label{e26}
\|v-u\|_{C[-r,0]}=0.
\end{equation}
\end{small}
Thus, from \eqref{e25} and \eqref{e26} and by definition of $\|.\|_{\mathbb{X}_{\mathcal{C},\,\rho,\,\Psi}}$ we have,
$\|v-u\|_{\mathbb{X}_{\mathcal{C},\,\rho,\,\Psi}}\leq \epsilon \, C_f.$ This proves that the problem \eqref{p1}--\eqref{p2} is Ulam--Hyers stable. Further by defining $\theta (\epsilon)=\epsilon \, C_f,$ we get generalized Ulam--Hyers stablity.
\end{rem}
\section{Examples}
\begin{ex}
Consider the following implicit impulsive $\Psi$--HFDE with delay
\begin{small}
\begin{align}\label{e17}
\begin{cases}
 ^\mathcal{H} \mathbb{D}^{\alpha,\, \beta; \, \Psi}_{0^+}u(t)=\frac{\left( \Psi(t)-\Psi(0)\right) ^{1-\rho}}{50\, e^{(\Psi(t)-\Psi(0))}\,\left( 1+|u(t)|+|u(t-{\frac{1}{2}})|\right) } + \frac{\left| ^\mathcal{H} \mathbb{D}^{\alpha,\, \beta; \, \Psi}_{0^+}u(t)\right| }{15 \left( 1+\left| ^\mathcal{H} \mathbb{D}^{\alpha,\, \beta; \, \Psi}_{0^+}u(t)\right| \right) },~t \in J=(0,1]-\{\frac{1}{3}\},\\
\Delta \mathbb{I}_{0^+}^{1-\rho; \, \Psi}u(\frac{1}{3}^-)= \frac{\left( \Psi(\frac{1}{3})-\Psi(0)\right) ^{1-\rho} \left| u(\frac{1}{3}^-)\right| }{7 \left( 1+ \left| u(\frac{1}{3}^-)\right| \right) },   \\
 \mathbb{I}_{0^+}^{1-\rho; \, \Psi}u(0)= u_0 \in \mathbb{R} , \\
 u(t)=0,\, ~ t\in [-1,0]. 
\end{cases}
\end{align}
\end{small}
\end{ex}
Define 
\begin{small}
$f: (0,1]\times {\mathbb{R}}^3 \to \mathbb{R}$
\end{small}
 by 
\begin{small}
$$
f\left(t, u, v, w \right)= \frac{\left( \Psi(t)-\Psi(0)\right) ^{1-\rho}}{50\, e^{(\Psi(t)-\Psi(0))}\,\left( 1+|u|+|v|\right) } + \frac{\left| w \right| }{15 \left( 1+\left| w \right| \right) }
$$
\end{small}
and 
\begin{small}
$\mathcal{J}_1:\mathbb{R}\to \mathbb{R}$
\end{small}
 by
 \begin{small}
$$
\mathcal{J}_1(u)= \frac{\left( \Psi(\frac{1}{3})-\Psi(0)\right) ^{1-\rho} \left| u \right| }{7 \left( 1+ \left| u \right| \right) }.
$$
\end{small}
Then $f$ satisfies ($H_1$) indeed for $u_i, v_i, w_i \in \mathbb{R}, \text{for}~\, i=1,2$ and for $t\in (0,1]$ we have
\begin{small}
\begin{align*}
&\left|f\left(t, u_1, v_1, w_1 \right)-f\left(t, u_2, v_2, w_2 \right) \right| \\
&\leq \frac{\left( \Psi(t)-\Psi(0)\right) ^{1-\rho}}{50\, e^{(\Psi(t)-\Psi(0))}}\, \left|\frac{1}{ 1+|u_1|+|v_1|} -\frac{1}{ 1+|u_2|+|v_2|} \right| +\frac{1}{15}\, \left|\frac{\left| w_1 \right| }{ 1+\left| w_1 \right|  }-\frac{\left| w_2 \right| }{ 1+\left| w_2 \right|  } \right| \\
& \leq \frac{\left( \Psi(t)-\Psi(0)\right) ^{1-\rho}}{50}\, \left( \left|\,u_1- u_2\,\right| +\left|\, v_1-v_2\,\right| \right) + \frac{1}{15} \left|\,w_1-w_2\, \right|. 
\end{align*}
\end{small}
This implies the $f$ satisfies the hypothesis ($H_1$) with $\mathcal{L}_f=\frac{1}{15}$ and $K=\frac{1}{50}$. Further, for any $u.v\in \mathbb{R}$,
\begin{small}
\begin{align*}
\left|\mathcal{J}_1 (u)-\mathcal{J}_1 (v) \right| & =  \frac{\left( \Psi(\frac{1}{3})-\Psi(0)\right) ^{1-\rho}}{7}\left(\left|\,\frac{\left| u \right| }{ 1+\left|u \right|  }-\frac{\left| v \right| }{ 1+\left| v \right|  }\, \right| \right) \leq \frac{\left( \Psi(\frac{1}{3})-\Psi(0)\right) ^{1-\rho}}{7} |u-v|. 
\end{align*}
\end{small} 
 This shows that $\mathcal{J}_1 $ satisfy the assumptions ($H_2$) with $L_{{\mathcal{J}}_1}=\frac{1}{7}$. Thus in the view of Theorem \ref{exi} the problem \eqref{e17} has unique solution if the condition,
 \begin{small}
\begin{equation}\label{e188}
L=\left( \frac{1}{7\,\Gamma(\rho)} + \frac{3\,(\Psi(1)-\Psi(0))^{1-\rho+\alpha}}{70 \,\Gamma(\alpha+1)} \right) < 1 ,
\end{equation}
\end{small}
is satisfied. 
In addition for every solution $v \in \mathbb{X}_{\mathcal{C},\,\rho,\,\Psi}$ of the inequality
\begin{small}
\begin{align}
\begin{cases}
\left| ^\mathcal{H} \mathbb{D}^{\alpha,\beta;\,\Psi}_{0^+}  v(t)-\frac{\left( \Psi(t)-\Psi(0)\right) ^{1-\rho}}{50\, e^{(\Psi(t)-\Psi(0))}\,\left( 1+|u|+|v|\right) } - \frac{\left| w \right| }{15 \left( 1+\left| w \right| \right) }\right| \leq \epsilon\, E_\alpha\left((\Psi(t)-\Psi(0))^\alpha \right),~ t \in (0,1],\\
 |\Delta \mathbb{I}_{0^+}^{1-\rho; \, \Psi}v(t_k)- \frac{\left( \Psi(\frac{1}{3})-\Psi(0)\right) ^{1-\rho} \left| u \right| }{7 \left( 1+ \left| u \right| \right) }| \leq \epsilon,  
\end{cases}
\end{align}
\end{small}
there exists unique solution of the problem \eqref{e17} such that
\begin{small}
$$
(\Psi(t)-\Psi(0))^{1-\rho}\, |v(t)-u(t)|\leq \epsilon\, C_{p,E_\alpha}\, E_\alpha\left(\zeta_{f,\Psi}\,(\Psi(t)-\Psi(0))^\alpha \right), \,t\in (0,1],
$$
\end{small}
where
\begin{small}
\begin{align*}
C_{p,E_\alpha} &= \left( \frac{1 }{\Gamma(\rho)} + \,(\Psi(1)-\Psi(0))^{1-\rho}\, E_\alpha ((\Psi(1)-\Psi(0))^\alpha)\right)\, \left\lbrace 1+\frac{ 1}{7\,\Gamma(\rho)} E_\alpha \left(\frac{3\,(\Psi(1)-\Psi(0))^{1-\rho+\alpha}}{70} \right)  \right\rbrace,\\
\zeta_{f,\Psi}& =  \frac{15\,(\Psi(1)-\Psi(0))^{1-\rho}}{7}.
\end{align*}
\end{small}

In particular, take $\alpha={\frac{1}{2}}, \beta=1$, then $\rho=1$.  Let $\Psi(t)=t$ and $h(t)=t-{\frac{1}{2}},\, t\in [0,1]$.  Then the problem \eqref{e17} reduces to the following implicit impulsive Caputo FDE
\begin{small}
\begin{align}\label{e18}
\begin{cases}
 ^C \mathbb{D}^{\frac{1}{2}}_{0^+}u(t)=\frac{1}{50\, e^{t}\,\left( 1+|u(t)|+|u(t-{\frac{1}{2}})|\right) } + \frac{\left| ^C \mathbb{D}^{\frac{1}{2}}_{0^+}u(t)\right| }{15 \left( 1+\left| ^C \mathbb{D}^{\frac{1}{2}}_{0^+}u(t)\right| \right) },~t \in J=(0,1]-\{\frac{1}{3}\},\\
\Delta u(\frac{1}{3}^-)= \frac{\left| u(\frac{1}{3}^-)\right| }{7 \left( 1+ \left| u(\frac{1}{3}^-)\right| \right) },   \\
  u(t)=0,~t\in [-1,0]. 
\end{cases}
\end{align} 
\end{small} 
In this case, from \eqref{e188}, $\mathcal{L}\approx 0.1912<1$, and hence \eqref{e18} has unique solution and the corresponding problem is UHML stable.

Further, for $\Psi(t)=t$,  $\alpha= \frac{1}{3}$, $\beta=0$, we have $\rho=\frac{1}{3}$ and the problem \eqref{e17} reduces to implicit impulsive Riemann-Liouville FDE with delay. In this case $\mathcal{L}\approx 0.1013<1$. Hence the implicit impulsive Riemann-Liouville FDE has unique solution and the corresponding problem is UHML stable.
\end{proof}

\section*{Acknowledgment}
 The first author thanks Shivaji University, Kolhapur for the grants through research strengthening scheme. The second author  acknowledges the Science and Engineering Research Board (SERB), New Delhi, India for the Research Grant (Ref: File no. EEQ/2018/000407).

\end{document}